\newtheorem{teo}{Theorem}[section]
\newtheorem{lemma}[teo]{Lemma}
\newtheorem{cor}[teo]{Corollary}
\newtheorem{defi}[teo]{Definition}
\newtheorem{example}[teo]{Example}
\newtheorem{obs}[teo]{Observation}
\newtheorem{prob}[teo]{Problem}
\begin{document}

\title[Colored Tverberg Theorems for non-prime powers]{Colored Tverberg Theorems\\ for non-prime powers}


\author[\hspace{0.5cm} L. V. Mauri]{Leandro V. Mauri}
\address{Departamento de Matem\'atica\\
	Instituto de Ci\^encias Matem\'aticas e de Computa\c c\~ao\\
	S\~ao Paulo University (USP)- C\^ampus de S\~ao Carlos \\
	 S\~ao Carlos, SP, Brazil}
\email{leandro.mauri@usp.br}

\author[R. T. \v{Z}ivaljevi\'{c}]{Rade T. \v{Z}ivaljevi\'{c}}
\address{Mathematical Institute\\
	Serbian Academy of Sciences and Arts (SASA)\\
	Belgrade, Serbia}
\email{rade@turing.mi.sanu.ac.rs }

\author[ D. De Mattos]{Denise de Mattos}
\address{Departamento de Matem\'atica\\
	Instituto de Ci\^encias Matem\'aticas e de Computa\c c\~ao\\
	S\~ao Paulo University (USP)- C\^ampus de S\~ao Carlos \\
	S\~ao Carlos, SP, Brazil}
\email{deniseml@icmc.usp.br}

\author[E. L. dos Santos]{Edivaldo L. dos Santos}
\address{Departamento de Matem\'atica\\
	Universidade Federal de S\^{a}o Carlos\\
	Federal University of S\~{a}o Carlos  (UFSCAR) - C\^ampus de S\~ao Carlos \\
	 S\~ao Carlos, SP, Brazil}
\email{edivaldo@ufscar.br}

\thanks{Leandro V. Mauri was supported by FAPESP of Brazil Grant number 2018/23928-2}

\thanks{R. \v Zivaljevi\' c was supported by the Science Fund of the Republic of Serbia, Grant No.\ 7744592, Integrability and Extremal Problems in Mechanics, Geometry and Combinatorics - MEGIC}

\begin{abstract} We prove a relative of both the \emph{original} and the \emph{optimal (Type B)}  version of the  Colored Tverberg theorem of \v{Z}ivaljevi\'{c} and Vre\'{c}ica
(Theorems \ref{teo:ZV} and \ref{teo:ZV-B}), which modifies these results in two different ways.

(1)  We extend the original theorems beyond the prime powers by showing that the theorem is valid if the number of rainbow faces is $q= p^n-1$.

(2)  The size of some rainbow simplices  may be  smaller than in the original theorems. More precisely $\vert C_i\vert \in \{2q-2, 2q+1\}$  while (for comparison) in the original theorems it is $\vert C_i\vert = 2q-1$.

\smallskip

The proof relies on equivariant index theory and a result of Volovikov [10] about partial coincidences of maps $f : X \rightarrow \mathbb{R}^d$, from a $G$-space into the
Euclidean space.  \end{abstract}


\maketitle

\section{Introduction}
\label{sec:intro-new}

Let $K\subseteq 2^{[m]}$ be a simplicial complex  (with $m$ vertices).
A continuous map  $f : K \rightarrow \mathbb{R}^d$ is called an \emph{almost $r$-embedding}
if $f(\Delta_1)\cap \dots\cap f(\Delta_r) = \emptyset$  for each collection $\{\Delta_i\}_{i=1}^r$ of pairwise disjoint faces of $K$. If an almost $r$-embedding of $K$ in $\mathbb{R}^d$ does not exist we say that $K$ is \emph{not almost $r$-embeddable} in $\mathbb{R}^d$.
The general \emph{Tverberg problem} is to describe interesting classes of simplicial complexes which are or are not almost  $r$-embeddable in $\mathbb{R}^d$. Historically the case of an $N$-dimensional simplex $K=\Delta_N$ was studied first. It is still one of the central research themes, side by side with the case when $K = R_{C_1,C_2,\dots, C_{k+1}}:=  C_1\ast\dots\ast C_{k+1}$ is the join of $0$-dimensional complexes (the Colored Tverberg problem).

\subsection{Almost $r$-embedding for non prime powers}
\label{sec:beyond-prime-powers}

It is known \cite{Fr15} \cite{BFZ} that almost $r$-embeddability (or non-embeddability) of a simplicial complex is critically dependent on the arithmetical properties of $r$. More precisely $r$ is assumed to be a prime power $r = p^n$  in the majority of  results of this type.

\medskip
What if $r$ is not a prime power? For example if $K=\Delta_N$ is an  $N$-dimensional simplex then, as documented in the following  results,
the non-prime power case holds only if we substantially increase the dimension of the simplex.

\bigskip\noindent
$\bullet_1$  \quad $\Delta_N$ is not almost $r$-embeddable in $\mathbb{R}^d$ if  $r = p^\nu$ is a prime power, $d\geq 1$, and $N = (r-1)(d+1)$.

\medskip\noindent
\hfill (I. B\' ar\' any, S.B. Shlosman, A.~Sz\H ucs. 1981; M. \"{O}zaydin 1987; A.Yu. Volovikov 1996; etc.)

\medskip

\bigskip\noindent
$\bullet_2$ \quad $\Delta_{r(d+1)-1}$ is not almost $r$-embeddable in $\mathbb{R}^d$ for all $r\geq 2$ and $d\geq 1$.

\medskip\noindent\hfill (F. Frick and P. Soberon [FS20])

\bigskip\noindent
$\bullet_3$  \quad $\Delta_{(r-1)(d+1)}$ is almost $r$-embeddable in $\mathbb{R}^d$ if  $r$ is not a prime power and $d\geq 2r+1$.

\medskip\noindent
\hfill (\cite{Fr15,BBZ,BFZ,BS17,Sh18,Sk16})

\bigskip\noindent
$\bullet_4$ \quad $\Delta_N$ is almost $r$-embeddable in $\mathbb{R}^d$  if $r$ is not a prime power and $N = (d+1)r - r\left \lceil{\frac{d+2}{r+1}}\right \rceil -2$

\medskip\noindent\hfill (S. Avvakumov, R. Karasev and A. Skopenkov \cite{AKS19})



\bigskip
All these results are  instances of the following general problem:
  Determine integers $a$ and $d$ such that there exists (or there does not exist) an almost $r$-embedding $\Delta_a \rightarrow \mathbb{R}^d$.
  All of them illustrate the fact that the case when $r$ is not prime power is more subtle and currently in the mainstream of research in this area.

\medskip
It appears that the \emph{Colored Tverberg problem} \cite{Z17} 
was somewhat neglected and not directly affected by these developments. In particular, much less is  known about the  almost $r$-non embeddability  of
``rainbow complexes''
$$K = R_{C_1,C_2,\dots, C_{k+1}} := C_1\ast C_2\ast\dots\ast C_{k+1}$$
if $r$ is not a prime power.

\medskip
Our Theorem \ref{teo:first}  is an example of such an  extension where:

\begin{itemize}
    \item[(1)] the number of intersecting rainbow faces is  $q = p^n-1$;
    \item[(2)] $\vert C_1\vert = \vert C_2\vert = \dots = \vert C_m\vert = 2q+1$, $\vert C_{m+1}\vert = \dots = \vert C_{k+1-m}\vert = 2q-2$, under the condition \begin{equation} m \ge (d-k)(p^n-1) = (d-k)q \, .
    \label{eq:cond}
        \end{equation}
\end{itemize}
If $k = d$ the condition (\ref{eq:cond}) disappears and we observe (Corollary \ref{cor:first}) that the result is valid if $m = 0$.
This is a slight improvement over Theorem \ref{teo:ZV}, where $$\vert C_1\vert = \vert C_2\vert = \dots = \vert C_{d+1}\vert = 2r-1 \, .$$
(Note however that neither Theorem \ref{teo:ZV} nor Theorem \ref{teo:ZV-B} is formal
consequence of Theorem \ref{teo:first}.)

Examples \ref{ex:first} and \ref{ex:second}) illustrate some special, low-dimensional cases of Theorem \ref{teo:first} which indicate that this result should be often close to the optimal in the case when the number of rainbow simplices is $p^n-1$.

\section{An overview of topological Tverberg type results}

The following result is known as the topological Tverberg theorem.

 \begin{teo}[{Topological Tverberg theorem, \cite{Bar81}, \cite{Oza87}, \cite{Vol96}}] \label{teo:TTT}
 Let $d \ge 1$, $r \ge 2$, and $N= (r-1) (d+1)$ be integers. If $r$ is a prime power, then for any continuous map $f: \Delta_N \rightarrow \mathbb{R}^{d}$ there are $r$ pairwise disjoint faces $\sigma_1, \ldots , \sigma_r$ of $\Delta_N$ such that $f(\sigma_1) \cap \cdots \cap f(\sigma_r) \neq \emptyset.$

\end{teo}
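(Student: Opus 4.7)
\emph{Proof plan.} The plan is to apply the standard Configuration Space / Test Map paradigm, culminating in an equivariant Borsuk--Ulam type non-existence coming from Volovikov's lemma (equivalently, from a Fadell--Husseini index computation).

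First, I would set up the configuration space $X = (\Delta_N)^{*r}_{\Delta(2)}$, the $r$-fold pairwise deleted join of the simplex. A point of $X$ is a formal convex combination $t_1 x_1 + \cdots + t_r x_r$ with $x_i \in \sigma_i$ for some $r$-tuple of pairwise disjoint faces $\sigma_1,\ldots,\sigma_r$ of $\Delta_N$. One has the classical identification $X \cong [r]^{*(N+1)}$, so $X$ is an $N$-dimensional, $(N-1)$-connected complex (in fact homotopy equivalent to a wedge of copies of $S^N$), and the symmetric group $S_r$ acts on $X$ by permuting the join factors.

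Next, from $f : \Delta_N \to \mathbb{R}^d$, I would build an $S_r$-equivariant test map $F : X \to W_r^{\oplus (d+1)}$, where $W_r = \{\, y \in \mathbb{R}^r : \sum_i y_i = 0 \,\}$, by the formula
\[
F(t_1 x_1 + \cdots + t_r x_r) = \bigl(\, t_1 - \tfrac{1}{r},\,\ldots,\, t_r - \tfrac{1}{r};\; t_1 f(x_1) - \bar{y},\,\ldots,\, t_r f(x_r) - \bar{y}\,\bigr),
\]
with $\bar{y} = \tfrac{1}{r}\sum_i t_i f(x_i)$. A zero of $F$ occurs precisely when $t_1=\cdots=t_r=1/r$ and $f(x_1)=\cdots=f(x_r)$, i.e.\ when the desired $r$-fold coincidence on pairwise disjoint faces is achieved. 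Assuming for contradiction that no such coincidence exists, normalization produces an $S_r$-equivariant map $\widehat{F} : X \to S(W_r^{\oplus (d+1)}) \cong S^{N-1}$, since $\dim_{\mathbb{R}} W_r^{\oplus (d+1)} = (r-1)(d+1) = N$.

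Finally, I would invoke the prime-power assumption: write $r = p^\nu$ and let $G = (\mathbb{Z}/p)^\nu \leq S_r$ act through the regular representation. Then $G$ acts freely on $S(W_r^{\oplus (d+1)})$, while $X$ is $(N-1)$-connected. Volovikov's lemma (or equivalently Özaydin's argument, or the original B\'ar\'any--Shlosman--Sz\H ucs approach for $r=p$) rules out the existence of any $G$-equivariant map $X \to S^{N-1}$, delivering the required contradiction. The main obstacle lies in this last step: one must verify that the appropriate mod-$p$ primary obstruction in $H^{N}_G\bigl(X;\widetilde{H}_{N-1}(S^{N-1})\bigr)$ is nonzero, which is the precise moment where the prime-power hypothesis is indispensable, since only then does $S_r$ contain an elementary abelian $p$-subgroup acting freely on the target sphere of the right dimension; for general $r$ this mechanism collapses, which motivates the subtler arguments in the rest of this paper.
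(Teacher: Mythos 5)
The paper does not prove Theorem \ref{teo:TTT}; it is quoted as a known background result with citations to B\'ar\'any--Shlosman--Sz\H ucs, \"Ozaydin and Volovikov. So the comparison here is only between your sketch and the standard argument those references supply. Your setup is the correct and standard one: the identification $(\Delta_N)^{*r}_{\Delta(2)}\cong [r]^{*(N+1)}$, its $(N-1)$-connectedness, the test map into $W_r^{\oplus(d+1)}$ with $\dim W_r^{\oplus(d+1)}=(r-1)(d+1)=N$, and the reduction to the non-existence of a $G$-equivariant map $X\to S^{N-1}$ for $G=(\mathbb{Z}/p)^{\nu}$ embedded via the regular representation.

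There is, however, a genuine error at the crux. You assert that $G=(\mathbb{Z}/p)^{\nu}$ acts \emph{freely} on $S\bigl(W_r^{\oplus(d+1)}\bigr)$, and you repeat this in the closing sentence (``only then does $S_r$ contain an elementary abelian $p$-subgroup acting freely on the target sphere''). This is false for $\nu\ge 2$: a permutation $\sigma\in S_r$ acts freely on $S(W_r)$ exactly when its fixed subspace in $\mathbb{R}^r$ is the diagonal, i.e.\ when $\sigma$ is an $r$-cycle, hence has order $r=p^{\nu}$; but every non-identity element of $(\mathbb{Z}/p)^{\nu}$ has order $p$, so for $\nu\ge 2$ each such element fixes a subspace of $W_r$ of dimension $r/p-1>0$ and therefore has fixed points on the sphere. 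Consequently no elementary abelian $p$-subgroup of $S_r$ acts freely on $S(W_r^{\oplus(d+1)})$ unless $r=p$ is prime, and any argument resting on freeness (Dold's theorem, or a primary-obstruction count over a free action) collapses precisely in the prime-power case $\nu\ge 2$ that makes this theorem nontrivial. The correct statement is only that the $G$-action on the target sphere is \emph{fixed-point free}, and the non-existence of the equivariant map must then be extracted from a tool valid under that weaker hypothesis: in the language of this paper, Theorem \ref{teo:3.7}(2) gives $i\bigl(S^{N-1}\bigr)=N$ for a fixed-point-free action, while $i(X)\ge \operatorname{conn}(X)+2=N+1$ by Theorem \ref{teo:3.11}, so monotonicity of the index (Theorem \ref{teo:3.7}(1)) yields the contradiction; equivalently one may use \"Ozaydin's localization argument. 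Your diagnosis of why the mechanism fails for non-prime-powers is likewise off: the obstruction does not merely ``collapse'' for lack of a free subgroup --- \"Ozaydin proved that for $r$ not a prime power the $S_r$-equivariant map $X\to S(W_r^{\oplus(d+1)})$ actually exists, so the whole configuration-space/test-map reduction is vacuous there.
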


\vspace{0.2cm}
Interesting problems and (conjectured) extensions and relatives  of the Topological Tverberg theorem have emerged over the years. In particular, motivated by questions from discrete and computational geometry, Bárány and Larman \cite{Bar92} formulated in 1992 the \emph{colored Tverberg problem}.

\vspace{0.2cm}

\begin{defi}[{Coloring}] Let $N \ge 1$ be an integer and let $V(\Delta_N)$ be the set of vertices of the simplex $\Delta_N$. A \textit{coloring} of vertices of $V(\Delta_N)$ by $l$ colors is a partition $(C_1, \ldots , C_l)$ of $V(\Delta_N)$, that is $V(\Delta_N) = C_1 \cup \cdots \cup C_l$, with $C_i \cap C_j = \emptyset$, for $1 \le i < j \le l$. The elements of the partition $(C_1, \ldots , C_l)$ are called \textit{color classes}.
\end{defi}

\begin{defi}[{Rainbow face}] Let $(C_1, \ldots , C_l)$ be the coloring of $V(\Delta_N)$ by $l$ colors. A face $\sigma$ of the  simplex $\Delta_N$ is a \textit{rainbow face} if $|\sigma \cap C_i| \le 1$, for all $1 \le i \le l$.

\vspace{0.2cm}

\end{defi}

\begin{prob}[{Bárány-Larman colored Tverberg problem}] Let $d \ge 1$ and $r \ge 2$ be integers. Determine the smallest number $n= n(d,r)$ such that for every map $f: \Delta_{n-1} \rightarrow \mathbb{R}^{d}$, and every coloring $(C_1, \ldots , C_{d+1})$ of the vertex set $V(\Delta_{n-1})$ of the simplex $\Delta_{n-1}$ by $d+1$ colors, with each color of size at least $r$, there exist $r$ pairwise disjoint rainbow faces $\sigma_1, \ldots , \sigma_r$ of $\Delta_{n-1}$ satisfying: \begin{eqnarray} \hspace{0.2cm} f(\sigma_1) \cap \cdots \cap f(\sigma_r) \neq \emptyset. \nonumber\end{eqnarray}

\end{prob}

A modified colored Tverberg problem was presented by \v{Z}ivaljevi\'{c} and Vre\'{c}ica in \cite{Ziv92}.

\begin{prob}[{\v{Z}ivaljevi\'{c}-Vre\'{c}ica colored Tverberg problem}] Let $d \ge 1$ and $r \ge 2$ be integers. Determine the smallest number $t=t(d,r)$ such that for every affine (or continuous) map $f: \Delta \rightarrow \mathbb{R}^{d}$, and every coloring $(C_1, \ldots , C_{d+1})$ of the the vertex set $V(\Delta)$ by $d+1$ colors, with each color of size at least $t$, there exist $r$ pairwise disjoint rainbow faces $\sigma_1, \ldots , \sigma_r$ of $\Delta_{n-1}$ satisfying: \begin{eqnarray}  f(\sigma_1) \cap \cdots \cap f(\sigma_r) \neq \emptyset.\nonumber\end{eqnarray}
\end{prob}

For $r \ge 2$ a prime power, \v{Z}ivaljevi\'{c} and Vre\'{c}ica proved that $t(d,r) \le 2r-1$. This result is known as the \emph{(original) Colored Tverberg theorem of \v{Z}ivaljevi\'{c} and Vre\'{c}ica}.

\vspace{0.2cm}

\begin{teo}[{Colored Tverberg theorem of \v{Z}ivaljevi\'{c} and Vre\'{c}ica} \cite{Ziv92}]
\label{teo:ZV}
Let $d \ge 1$ be an integer, and let $r =p^{n}\ge 2$ be a prime power. For every continuous map $f: \Delta \rightarrow \mathbb{R}^{d}$, and every coloring $(C_1, \ldots , C_{d+1})$ of the the vertex set $V(\Delta)$ by $d+1$ colors, with each color of size at least $2r-1$, there exist $r$ pairwise disjoint rainbow faces $\sigma_1, \ldots , \sigma_r$ of $\Delta$ satisfying: \begin{eqnarray} f(\sigma_1) \cap \cdots \cap f(\sigma_r) \neq \emptyset.\nonumber\end{eqnarray}

\end{teo}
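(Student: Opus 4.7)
My plan is to run the standard \emph{configuration space / test map} scheme with the rainbow complex
\[
K := C_1 \ast C_2 \ast \cdots \ast C_{d+1},
\]
whose faces are exactly the rainbow faces of $\Delta$, playing the role of the simplex. Argue by contradiction: assume $f(\sigma_1)\cap\cdots\cap f(\sigma_r) = \emptyset$ for every $r$-tuple of pairwise disjoint faces $\sigma_1,\dots,\sigma_r$ of $K$. The goal is then to build a $G$-equivariant map from a highly connected configuration space into the unit sphere of a fixed-point-free $G$-representation of too small dimension, and derive a contradiction via equivariant obstruction theory.

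Take as configuration space the $r$-fold deleted join
\[
X \ :=\ K^{\ast r}_\Delta\ =\ [C_1]^{\ast r}_\Delta \ast [C_2]^{\ast r}_\Delta \ast \cdots \ast [C_{d+1}]^{\ast r}_\Delta ,
\]
in which each factor $[C_i]^{\ast r}_\Delta$ is the chessboard complex $\Delta_{|C_i|, r}$. Writing $r = p^n$, let $G = (\mathbb{Z}/p)^n$ act on $[r]$ by its regular representation and on $X$ by permuting the $r$ copies. Because we use the \emph{deleted} join, no simplex of $X$ has two of its vertices in the same copy of any $C_i$, which forces this $G$-action to be free. The contradiction hypothesis produces a continuous $G$-equivariant map
\[
F : X \longrightarrow W_r^{\oplus d} \setminus \{0\} \ \simeq_G \ S(W_r^{\oplus d}), \qquad W_r = \{ y \in \mathbb{R}^r : y_1 + \cdots + y_r = 0 \},
\]
obtained by coordinatewise linear extension of $f$ on the $r$ copies and then orthogonal projection off the diagonal of $(\mathbb{R}^d)^r$; the hypothesis guarantees that the image avoids the origin.

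The contradiction comes from comparing connectivities. By the Björner--Lovász--Vrećica--Živaljević theorem on chessboard complexes, $\Delta_{m, r}$ is $(r-2)$-connected whenever $m \geq 2r-1$, and so each factor $[C_i]^{\ast r}_\Delta$ is $(r-2)$-connected. The join-connectivity estimate $\mathrm{conn}(Y \ast Z) \geq \mathrm{conn}(Y) + \mathrm{conn}(Z) + 2$, iterated $d$ times, yields
\[
\mathrm{conn}(X)\ \geq\ (d+1)(r-2) + 2d \ =\ (d+1)r - 2 \ \geq\ d(r-1) - 1 \ =\ \dim S(W_r^{\oplus d}).
\]
Since $G = (\mathbb{Z}/p)^n$ acts freely on $X$ and $X$ is at least $\dim S(W_r^{\oplus d})$-connected, the Volovikov equivariant obstruction \cite{Vol96} for free actions of an elementary abelian $p$-group forbids any $G$-equivariant map $X \to S(W_r^{\oplus d})$, contradicting the existence of $F$.

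The main obstacle I anticipate is the careful verification of the two points on which the scheme rests: freeness of the $G$-action on $X$ (handled cleanly by the \emph{deleted} part of the join together with freeness of the regular action of $G$ on $[r]$), and the precise invocation of the sharp connectivity bound $(r-2)$ for $\Delta_{2r-1, r}$, which is exactly what is needed to match $\dim S(W_r^{\oplus d})$. The prime-power hypothesis $r = p^n$ enters only in the final step, through Volovikov's obstruction; this is what makes the genuine non-prime-power case (addressed elsewhere in the present paper) a substantially different problem.
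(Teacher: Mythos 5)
The paper does not actually prove this statement: Theorem \ref{teo:ZV} is quoted as background from \cite{Ziv92}, so your proposal can only be measured against the standard argument (and against the closely parallel index computation the paper performs for its own main result in Lemma \ref{lema:index-3-3}). Your overall scheme is exactly that standard argument: the deleted join of the rainbow complex, the identification of each factor $[C_i]^{\ast r}_{\Delta}$ with the chessboard complex $\Delta_{|C_i|,r}$, the Bj\"orner--Lov\'asz--Vre\'cica--\v{Z}ivaljevi\'c connectivity bound $\mathrm{conn}(\Delta_{2r-1,r})=r-2$, the join connectivity inequality, and a Volovikov-type nonexistence result for equivariant maps into spheres with fixed-point-free $(\mathbb{Z}_p)^n$-action. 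Those steps are all correct as you state them, as is the freeness of the action on the deleted join.

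The genuine gap is in the test map. Your domain is the deleted \emph{join} $X=K^{\ast r}_{\Delta}$, whose simplices $\sigma_1\ast\cdots\ast\sigma_r$ are allowed to have empty parts --- and they must be allowed, since the faces of $\Delta_{|C_i|,r}$ that do not use all $r$ columns correspond precisely to such points; without them the factors are not chessboard complexes and your connectivity computation is unavailable. On a point $\lambda_1x_1+\cdots+\lambda_rx_r$ with some $\lambda_i=0$ the assignment $(x_1,\dots,x_r)\mapsto(f(x_1),\dots,f(x_r))$ is undefined ($x_i$ does not exist), and the contradiction hypothesis, which only concerns $r$-tuples of pairwise disjoint \emph{nonempty} faces, gives no control there; so the map $F$ into $S(W_r^{\oplus d})$ as described does not exist on all of $X$. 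The classical repair is to keep the join coefficients: send $\lambda_1x_1+\cdots+\lambda_rx_r$ to $\bigl(\lambda_1(1,f(x_1)),\dots,\lambda_r(1,f(x_r))\bigr)\in(\mathbb{R}^{d+1})^r$, with the $i$-th block set to $0$ when $\lambda_i=0$. A point of the diagonal then forces $\lambda_1=\cdots=\lambda_r=1/r$, hence all faces nonempty and $f(x_1)=\cdots=f(x_r)$, so under your hypothesis the image misses the diagonal and you obtain a $G$-map to $S(W_r^{\oplus(d+1)})$ of dimension $(d+1)(r-1)-1$; your connectivity bound $(d+1)r-2\ge(d+1)(r-1)-1$ still holds (with slack $d-1\ge 0$), and the contradiction goes through. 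The alternative is to keep your test map and pass to the deleted \emph{product}, which is what the present paper does for its main theorem, but then the chessboard-join decomposition is lost and the index of the product must be bounded by other means, as in Lemma \ref{lema:index-3-3}.
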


\vspace{0.2cm}
The following result is known as \emph{Optimal (Type B) Colored Tverberg theorem of \v{Z}ivaljevi\'{c} and Vre\'{c}ica}, see [13] and [11].

\vspace{0.2cm}

\begin{teo} [{Optimal (Type B) Colored Tverberg theorem of \v{Z}ivaljevi\'{c} and Vre\'{c}ica}]
\label{teo:ZV-B}
Assume that $r=p^\nu$ is a prime power,  $d\geq 1$, and let $k$ be an integer such that $\frac{r-1}{r}d \leq k<  d$.  Then the complex
\[ R_{C_0, C_1,\dots, C_k}:= C_0\ast \dots\ast C_k\] is not almost $r$-embeddable in $\mathbb{R}^d$ if $\vert C_i\vert \geq 2r-1$ for all $i$.
\end{teo}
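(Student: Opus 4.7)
The plan is to apply the configuration-space/test-map paradigm together with the connectivity of chessboard complexes and a Volovikov-style equivariant index argument. Since $r = p^\nu$ is a prime power, the elementary abelian group $G = (\mathbb{Z}_p)^\nu$ embeds in $S_r$ via its regular representation, and this is the group I would use for all equivariance considerations.

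Suppose, for contradiction, that an almost $r$-embedding $f : R_{C_0, \ldots, C_k} \to \mathbb{R}^d$ exists. The standard deleted-join construction promotes $f$ to a $G$-equivariant map
$$\widetilde{f} : R^{*r}_\Delta \longrightarrow \bigl((\mathbb{R}^d)^{*r} \setminus \Delta\bigr) \simeq_G S\bigl(W^{\oplus(d+1)}\bigr),$$
where $W = \{x \in \mathbb{R}^r : \sum_i x_i = 0\}$ is the reduced regular representation of $G$ and the target is a free $G$-sphere of dimension $(r-1)(d+1)-1$. Distributivity of deleted joins over joins yields $R^{*r}_\Delta = (C_0)^{*r}_\Delta \ast \cdots \ast (C_k)^{*r}_\Delta$, and each factor, since $|C_i| \geq 2r-1$, contains the chessboard complex $\Delta_{r, 2r-1}$, which is $(r-2)$-connected by the classical Bj\"orner--Lov\'asz--Vre\'cica--\v{Z}ivaljevi\'c theorem. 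Iterating the join-connectivity identity $\mathrm{conn}(X \ast Y) = \mathrm{conn}(X) + \mathrm{conn}(Y) + 2$ over $k+1$ factors gives $\mathrm{conn}(R^{*r}_\Delta) \geq (k+1)r - 2$, and the hypothesis $\tfrac{r-1}{r}d \leq k$ is exactly equivalent to
$$(k+1)r - 2 \,\geq\, (r-1)(d+1) - 1 \,=\, \dim S\bigl(W^{\oplus(d+1)}\bigr),$$
so the source of $\widetilde{f}$ is at least as connected as the dimension of the target sphere.

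To convert this numerical coincidence into an actual obstruction I would invoke Volovikov's theorem [10] (equivalently, a Fadell--Husseini index comparison): the equivariant $\mathbb{F}_p$-index $\mathrm{Ind}_G(R^{*r}_\Delta)$ is bounded below by the cohomological connectivity of the chessboard factors, while $\mathrm{Ind}_G\bigl(S(W^{\oplus(d+1)})\bigr)$ is pinned down by the Euler class of the representation $W^{\oplus(d+1)}$; under the stated hypothesis the former strictly exceeds the latter, ruling out $\widetilde{f}$ and yielding the desired contradiction. The main technical obstacle I anticipate is not the connectivity estimate itself but certifying that the mod-$p$ equivariant cohomology classes supporting the index do not vanish on the chessboard factors of $R^{*r}_\Delta$. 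This is exactly where the prime-power assumption $r = p^\nu$ is used decisively, since it supplies the elementary abelian subgroup $(\mathbb{Z}_p)^\nu \leq S_r$ acting with a regular free orbit on the vertex sets of each $\Delta_{r, 2r-1}$, precisely the setting for which Volovikov's theorem is formulated and for which the Euler-class obstruction is nontrivial.
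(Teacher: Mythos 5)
The paper itself gives no proof of Theorem \ref{teo:ZV-B}: it is quoted as a known result of Vre\'cica and \v{Z}ivaljevi\'c \cite{VZ94,Ziv92}, so there is no in-paper argument to compare against. Your proposal is a correct reconstruction of the standard proof, and it runs parallel to the computation the paper itself performs in Lemma \ref{lema:index-3-3}. The bookkeeping checks out: each factor $(C_i)^{*r}_{\Delta(2)}$ is the chessboard complex $\Delta_{|C_i|,r}$, which is $(r-2)$-connected by \cite{blvz} (Theorem \ref{teo:3.12}); the join of $k+1$ such factors is $((k+1)r-2)$-connected by Theorem \ref{teo:3.9}; and $(k+1)r-2\geq (d+1)(r-1)-1$ is exactly equivalent to $k\geq \frac{r-1}{r}d$. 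Combining Theorem \ref{teo:3.11} with the monotonicity and sphere computations of Theorem \ref{teo:3.7} then gives $i(\mathrm{source})\geq (k+1)r>(d+1)(r-1)=i\bigl(S(W^{\oplus(d+1)})\bigr)$, which rules out the equivariant test map and completes the contradiction.

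Two small points deserve correction. First, for $\nu>1$ the $(\mathbb{Z}_p)^\nu$-action on $S(W^{\oplus(d+1)})$ is \emph{not} free, only fixed-point free: nontrivial elements of $G$ have nonzero fixed subspaces in the reduced regular representation $W$. Your argument survives because Volovikov's index only requires a fixed-point-free action on a cohomology sphere (Theorem \ref{teo:3.7}(2)), but any version of the final step that leans on freeness (e.g.\ Dold's theorem) would fail precisely for prime powers $r=p^\nu$ with $\nu>1$. Second, the phrase ``contains the chessboard complex $\Delta_{r,2r-1}$'' does not by itself bound the connectivity of the larger complex from below; the clean formulation is either to restrict the equivariant test map to the invariant join of these subcomplexes, or to apply the exact formula $\mathrm{conn}(\Delta_{m,n})=\min\{m,n,\lfloor\frac{m+n+1}{3}\rfloor\}-2$ directly to $\Delta_{|C_i|,r}$, which still yields connectivity exactly $r-2$ because of the $\min$ with $n=r$.
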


\section{Topological preliminaries}

In this section we collect  central definitions and results needed for the proof of our main theorem.

\subsection{Configuration spaces}

\emph{Deleted joins} and \emph{deleted products} are the standard \emph{configuration spaces} used, in the framework of the \emph{configuration space/test map scheme}  \cite{mat08, Z17, Bla17},  in applications of topological methods to problems of combinatorics and discrete and computational geometry.

\vspace{0.2cm}

\begin{defi}[{Deleted join}] Let $K$ be a simplicial complex, let $n \ge 2$, $k \ge 2$ be integers, and let $[n]=\{1, \ldots, n\}$. The \textit{$n$-fold $k$-wise deleted join} of the simplicial complex $K$ is the simplicial complex:
\begin{eqnarray}
K_{\Delta(k)}^{\ast n} =\bigcup \left\{  \sigma_1 \ast \cdots \ast \sigma_n \subset K^{\ast n} \mid  (\forall I \subset [n])\, \vert I\vert \ge k \Rightarrow \bigcap_{i \in I} \sigma_i = \emptyset  \right\}\nonumber
\end{eqnarray}
 where $\sigma_1, \ldots , \sigma_n$ are faces of $K$, including the empty face. The symmetric group $\mathcal{G}_{n}= \mbox{Sym} (n)$ acts on $K_{\Delta(k)}^{\ast n}$ by: \begin{eqnarray} \pi \cdot ( \lambda_1 x_1 + \cdots + \lambda_n x_n ) = \lambda_{\pi^{-1}(1)} x_{\pi^{-1}(1)} + \cdots + \lambda_{\pi^{-1}(n)} x_{\pi^{-1}(n)},  \nonumber \end{eqnarray}  for $\pi \in \mathcal{G}_{n}$ and $\lambda_1 x_1 + \cdots + \lambda_n x_n \in K_{\Delta(k)}^{\ast n}$.

\end{defi}

\vspace{0.2cm}

\begin{defi}[{Deleted product}] Let $K$ be a simplicial complex, let $n \ge 2$, $k \ge 2$ be integers, and let $[n]=\{1, \ldots, n\}$. The \textit{$n$-fold $k$-wise deleted product} of the simplicial complex $K$ is the cell complex:
\begin{eqnarray} 
K_{\Delta(k)}^{\times n} = \bigcup\left\{\sigma_1 \times \cdots \times \sigma_n \subset K^{\times n} \mid ( \forall I \subset [n])\, \vert I\vert \ge k \Rightarrow \bigcap_{i \in I} \sigma_i = \emptyset  \right\}\nonumber
\end{eqnarray} 
where $\sigma_1, \ldots , \sigma_n$ are non-empty faces of $K$. The symmetric group $\mathcal{G}_{n}= \mbox{Sym} (n)$ acts on $K_{\Delta(k)}^{\times n}$ by: \begin{eqnarray} \pi \cdot ( x_1, \ldots , x_n) = ( x_{\pi^{-1}(1)} , \ldots , x_{\pi^{-1}(n)}),  \nonumber \end{eqnarray}  for $\pi \in \mathcal{G}_{n}$ and $( x_1 , \ldots, x_n ) \in K_{\Delta(k)}^{\times n}$.

\end{defi}

\vspace{0.2cm}

\begin{defi}[{Chessboard complex}] The \textit{$m \times n$ chessboard complex} $\Delta_{m,n}$ is the simplicial complex whose vertex set is $[m] \times [n]$, and the simplexes of $\Delta_{m,n}$ are the subsets $\{(i_0,j_0), \ldots, (i_k,j_k) \} \subset [m] \times [n]$, where $i_s \neq i_{s'}$ ($1 \le s < s'\le k$), and $j_t \neq j_{t'}$ ($1 \le t < t'\le k$).

\end{defi}

\vspace{0.2cm}

\begin{defi}[{Rainbow subcomplex}] Let $\Delta$ be a simplex with a coloring $(C_1 , \ldots, C_{d+1})$ by $(d+1)$ colors.  We define the \textit{rainbow subcomplex} $R_{(C_1, \ldots, C_{d+1})} \subset \Delta$ as follows: \begin{eqnarray} R_{(C_1, \ldots, C_{d+1})} \cong C_1 \ast \cdots \ast C_{d+1},\nonumber \end{eqnarray} where $C_i$ is a discrete set of points, for every $i \in [d+1]$.

\end{defi}

\vspace{0.2cm}

\subsection{Volovikov index}

The following fundamental result of cohomology theory is used in the definition the Volovikov index of a $G$-space $X$.

\vspace{0.2cm}

\begin{teo}[{The cohomology Leray-Serre Spectral sequence (Theorem $5.2$ \cite{McC01})}]
Let $R$ be a commutative ring with the unity. Given a fibration $F \hookrightarrow E \stackrel{p}{\rightarrow} B$, where $B$ is a path-wise connected space, there is a first quadrant spectral sequence of algebras $\{ E_r^{\ast, \ast}, d_r \}$, with: \begin{eqnarray} E_{2}^{p,q} \cong H^{p} ( B ; \mathcal{H}^{q} (F;R)), \nonumber \end{eqnarray} the cohomology of $B$, with local coefficients in the cohomology of $F$, the fiber of $p$, and converging to $H^{\ast}(E;R)$ as an algebra. Furthermore, this spectral sequence is natural with the respect to fiber-preserving maps of fibrations.

\end{teo}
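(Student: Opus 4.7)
The plan is to realize the Serre spectral sequence as the spectral sequence of a filtered cochain complex. Without loss of generality, replace the base $B$ by a weakly equivalent CW complex (both sides of the claimed isomorphism are weak-equivalence invariants) and consider the skeletal filtration $B^{(0)}\subseteq B^{(1)}\subseteq\cdots$. Pulling back along $p$ produces $E^{(p)} := p^{-1}(B^{(p)})$, and then one defines
\[F^{p}C^{n}(E;R) := \ker\!\bigl(C^{n}(E;R)\to C^{n}(E^{(p-1)};R)\bigr).\]
Since $F^{0}C^{n}=C^{n}$ and $F^{n+1}C^{n}=0$, this is a bounded decreasing filtration, so the associated spectral sequence is a convergent first-quadrant spectral sequence abutting to $H^{*}(E;R)$.

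The next and most substantial step is to identify the $E_{1}$ and $E_{2}$ pages. By excision,
\[E_{0}^{p,q}\cong C^{p+q}\bigl(E^{(p)},E^{(p-1)};R\bigr)\cong \bigoplus_{e}\,C^{p+q}\bigl(p^{-1}(\bar{e}),p^{-1}(\partial e);R\bigr),\]
where the sum ranges over the $p$-cells $e$ of $B$. By homotopy lifting, each pair $(p^{-1}(\bar{e}),p^{-1}(\partial e))$ is equivalent to $(D^{p}\times F,\partial D^{p}\times F)$. Taking vertical cohomology with respect to $d_{0}$ (the internal differential of $F$) then yields
\[E_{1}^{p,q}\cong \bigoplus_{e} H^{q}(F;R) = C^{p}_{\mathrm{CW}}\bigl(B;\mathcal{H}^{q}(F;R)\bigr),\]
where $\mathcal{H}^{q}(F;R)$ is the local coefficient system induced by the monodromy action of $\pi_{1}(B)$. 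A direct unwinding identifies $d_{1}$ with the cellular coboundary with local coefficients, so $E_{2}^{p,q}\cong H^{p}(B;\mathcal{H}^{q}(F;R))$, as claimed.

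For the algebra structure, I would verify that the Alexander--Whitney cup product on $C^{*}(E;R)$ respects the filtration, $F^{p}\smile F^{p'}\subseteq F^{p+p'}$, which turns $\{E_{r},d_{r}\}$ into a spectral sequence of bigraded $R$-algebras whose $E_{2}$-identification is compatible with the cup product on cohomology with local coefficients. Naturality with respect to a fiber-preserving map $(f,\bar{f})$ of fibrations is then automatic once one cellularly approximates the map of bases: the induced cochain map respects the pullback filtrations and hence yields a morphism of spectral sequences.

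I expect the main obstacle to be the correct identification of the local coefficient system appearing on the $E_{2}$ page: when $B$ is not simply connected, the $\pi_{1}(B)$-action obtained by gluing per-cell trivializations must be shown to agree with the intrinsic monodromy coming from path lifting in the fibration. This is settled by comparing the two natural $\pi_{1}(B)$-actions on $H^{*}(F;R)$ and invoking the homotopy invariance of Serre fibrations. Once this point is handled, convergence, multiplicativity, and naturality all follow from standard filtered-complex bookkeeping, as carried out in full detail in \cite{McC01}.
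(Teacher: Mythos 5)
The paper does not prove this statement: it is quoted verbatim as background (Theorem 5.2 of McCleary's book) and is used only to set up the definition of the Volovikov index, so there is no internal proof to compare yours against. Judged on its own, your outline is the standard skeletal-filtration construction and most of it is sound, but two concrete claims fail as stated. First, $F^{n+1}C^{n}=0$ is false for singular cochains: a cochain supported on singular $n$-simplices of $E$ whose images are not contained in $E^{(n)}=p^{-1}(B^{(n)})$ lies in $F^{p}C^{n}$ for every $p$, so the filtration is not bounded at the cochain level. Convergence must instead be deduced from the vanishing $E_{1}^{p,q}=0$ for $q<0$ together with the $p$-connectivity of the pair $(E,E^{(p)})$, which bounds the induced filtration on $H^{*}(E;R)$.

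Second, and more seriously, the inclusion $F^{p}\smile F^{p'}\subseteq F^{p+p'}$ for the Alexander--Whitney product is false for this filtration. Take $B=S^{1}$ with one $0$-cell and one $1$-cell and the identity fibration; then $F^{2}C^{2}=0$, while two $1$-cochains vanishing on simplices at the basepoint can easily have nonzero cup product (evaluate on a $2$-simplex whose front and back edges both wrap around the circle). This is exactly the classical obstacle to proving multiplicativity of the Serre spectral sequence, and the verification you propose would not go through. The standard repairs are: route the cup product through the cross product $C^{*}(E)\otimes C^{*}(E)\to C^{*}(E\times E)$, which does respect the product filtration over $B\times B$, and then compose with a cellular approximation of the diagonal of $B$, controlling the filtration shift of the resulting chain homotopies; or use Serre's cubical chains or Dress's bisimplicial model. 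Your treatment of the $E_{2}$-identification, the local coefficient system, and naturality is fine at the level of a sketch, but the multiplicative step needs one of these genuine arguments rather than the direct filtration check.
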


\vspace{0.2cm}

We continue with the definition of the Volovikov index \cite{Vol00}. It is defined as a function on $G$-spaces (where $G$ is a compact Lie group) whose values are either positive integers or $\infty$. For our application it is sufficient to assume that $G$ is a $p$-torus  $G= (\mathbb{Z}_p)^{n}$, where $p$ a prime number.

\vspace{0.1cm}

\begin{defi}[{Volovikov index}] Let $G$  be a compact Lie group and let $X$ be a Hausdorff paracompact  $G$-space. The definition of the \textit{Volovikov index of $X$}, denoted by $i(X)$, uses the spectral sequence of the bundle $p_X: X_G \rightarrow BG$, with fibre $X$ (the Borel construction), given in Theorem $2.5$. This spectral sequence converges to the equivariant cohomology $H^{\ast} (X_G; \mathbb{Z}_p)$. Let $\Lambda^{\ast}$ be the equivariant cohomology algebra of a point $H^{\ast} (\mbox{pt}_G; \mathbb{Z}_p)= H^{\ast} (BG; \mathbb{Z}_p)$. Suppose that $X$ is path connected. Then $E_{2}^{\ast,0} = \Lambda^{\ast}$. Assume that $E_{2}^{\ast,0}= \cdots = E_{s}^{\ast,0} \neq E_{s+1}^{\ast,0}$. Then, by definition, $i(X)=s$. If $E_{2}^{\ast,0}= \cdots = E_{\infty}^{\ast,0}$ then, by definition, $i(X)= \infty$. Let $i'(X)$ be the least number $r$ such that the kernel of the natural homomorphism $\Lambda^{\ast} \rightarrow E_{r+1}^{\ast, 0}$ contains an element which is not a zero divisor in $\Lambda^{\ast}$.

\end{defi}

\vspace{0.2cm}

The following theorem describes some of the most important properties of the Volovikov index.

\vspace{0.2cm}

\begin{teo} {\rm (\cite{Vol00})} \label{teo:3.7}

$(1)$ If there exists an equivariant map of $G$-spaces $X \rightarrow Y$, then $i(X) \le i(Y)$ and $i'(X) \le i'(Y)$.

\vspace{0.1cm}

$(2)$ If $X$ is a compact or finite-dimensional cohomological sphere (over the the field $\mathbb{Z}_p$), i.e.,$H^{\ast}(X)= H^{\ast}(S^{n})$, and if $G$ acts with no fixed points on $X$, then $i(X)=i'(X)=n+1$.

\vspace{0.1cm}

$(3)$ If $\tilde{H}^{j}(X; \mathbb{Z}_p)=0$, for all $j<n$, then $i(X) \ge n+1$.

\vspace{0.1cm}

$(4)$ If $X= A \cup B$, where $A$ and $B$ are closed (or open) $G$-invariant subespaces, $i(X) \le i'(A) + i(B)$. In particular, $i (X\ast Y) \le i'(X)+ i(Y).$
\end{teo}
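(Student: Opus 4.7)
All four assertions are formal properties of the Serre spectral sequence $E_r^{*,*}(X)$ of the Borel fibration $X\hookrightarrow X_G\to BG$ with $\mathbb{Z}_p$ coefficients, as recalled above; the index $i(X)$ records the first page at which a class of $\Lambda^{*}=H^{*}(BG;\mathbb{Z}_p)$ is destroyed along the bottom row $E_r^{*,0}$. My plan is to handle each item by inspecting, page by page, how classes on this row can survive or be killed.

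For (1), the equivariant $f\colon X\to Y$ induces a morphism of Borel fibrations over $BG$ and hence a morphism of spectral sequences $\varphi\colon E_r(Y)\to E_r(X)$ which is the identity on $E_2^{*,0}=\Lambda^{*}$. If $\alpha=d_r^Y(y)$ in $E_r^{*,0}(Y)$, naturality forces $\alpha=\varphi(\alpha)=d_r^X(\varphi(y))$, so every death in $Y$ is inherited by $X$; this gives $i(X)\le i(Y)$, and the same argument transfers the non-zero-divisor witness needed for $i'$.

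For (3), the vanishing $\tilde H^{j}(X;\mathbb{Z}_p)=0$ for $j<n$ empties the rows $0<q<n$ of $E_2^{p,q}$, so any differential reaching $E_r^{p,0}$ must originate at $E_r^{p-r,r-1}$, which is zero for $r\le n$. Consequently $E_2^{*,0}=\cdots=E_{n+1}^{*,0}=\Lambda^{*}$ and $i(X)\ge n+1$. Part (2) combines (3) with the reverse inequality: a free $G$-action on a mod-$p$ cohomology $n$-sphere forces $X_G\simeq X/G$ to have cohomological dimension $\le n$, so the edge map $\Lambda^{*}\to H^{*}(X_G;\mathbb{Z}_p)$ cannot remain injective above degree $n$; analyzing the transgression $d_{n+1}\colon E_{n+1}^{0,n}\to E_{n+1}^{n+1,0}$ via the $\Lambda^{*}$-module structure, one shows that its image is generated by a non-zero divisor of $\Lambda^{n+1}$, giving $i'(X)=i(X)=n+1$.

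For (4), suppose $\alpha\in\Lambda^{*}$ is a non-zero divisor killed in $E(A)$ by page $i'(A)+1$ and $\beta\in\Lambda^{*}$ is killed in $E(B)$ by page $i(B)+1$. From the long exact sequences of the pairs $(X_G,A_G)$ and $(X_G,B_G)$, $\alpha$ lifts to $H^{*}(X_G,A_G;\mathbb{Z}_p)$ and $\beta$ lifts to $H^{*}(X_G,B_G;\mathbb{Z}_p)$, and their cup product lies in
\[
H^{*}(X_G,A_G\cup B_G;\mathbb{Z}_p)=H^{*}(X_G,X_G;\mathbb{Z}_p)=0,
\]
so $\alpha\beta$ dies on the bottom row of $E(X)$ by page $i'(A)+i(B)+1$, yielding $i(X)\le i'(A)+i(B)$. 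The join statement then reduces to this via the standard $G$-equivariant cover of $X\ast Y$ by two half-joins that deformation retract onto $X$ and $Y$. The hardest step is the upper-bound argument in (2): one needs both the geometric input that $X/G$ has cohomological dimension $\le n$ and an algebraic verification, using the explicit description of $\Lambda^{*}$ as a polynomial (tensor exterior) algebra for $G=(\mathbb{Z}_p)^{n}$, that the transgression produces a non-zero divisor rather than merely a nonzero class.
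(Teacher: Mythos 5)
The paper does not prove this theorem: it is imported verbatim from Volovikov \cite{Vol00}, so there is no in-paper argument to compare yours against, and your sketch has to stand on its own. Parts (1) and (3) of your sketch are correct and standard: the morphism of Borel fibrations induces a morphism of spectral sequences which is the identity on the bottom row at $E_2$, giving the inclusion of kernels $\ker\bigl(\Lambda^{*}\to E_{r+1}^{*,0}(Y)\bigr)\subseteq\ker\bigl(\Lambda^{*}\to E_{r+1}^{*,0}(X)\bigr)$ for every $r$, which yields both monotonicity statements; and the vanishing of the rows $0<q<n$ of $E_2$ does freeze the bottom row through page $n+1$.

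Parts (2) and (4) have genuine gaps. In (2) you silently strengthen ``$G$ acts with no fixed points'' to ``$G$ acts freely'' when you invoke $X_G\simeq X/G$ and the bound on the cohomological dimension of $X/G$; for $G=(\mathbb{Z}_p)^{n}$ with $n\ge 2$ these hypotheses are far apart, and the actual content of (2) --- that the class killed on the bottom row is a \emph{non-zero divisor} of $\Lambda^{*}$ --- is obtained in \cite{Vol00} from the localization theorem for $p$-tori (restriction to isotropy subgroups), not from a dimension count on the orbit space. In (4), your relative cup-product argument proves only that $\alpha\beta$ lies in $\ker\bigl(\Lambda^{*}\to H^{*}(X_G)\bigr)$, i.e.\ that it dies by page $\deg(\alpha\beta)+1$; it does not prove that it dies by page $i'(A)+i(B)+1$. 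Since neither $\deg\alpha\le i'(A)$ nor $\deg\beta\le i(B)$ holds in general (a class of arbitrarily large degree can be killed by a low differential), the asserted page bound does not follow from what you wrote. The missing step is a filtration argument: because $\alpha$ dies in $E(A)$ by page $i'(A)+1$, its lift $\tilde\alpha\in H^{*}(X_G,A_G)$ can be chosen in filtration at least $\deg\alpha-i'(A)+1$, and similarly for $\tilde\beta$; the vanishing of $\tilde\alpha\smile\tilde\beta$ together with the filtration degree of the product is what forces $\alpha\beta$ to be killed by a differential of the required low page. The reduction of the join inequality to the union inequality via the two half-joins is fine.
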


\subsection{Connectedness}
\label{sec:connected}

Here we review the definition and some basic properties of the \emph{connectedness} of topological spaces, including a key result which relates the connectedness to the Volovikov index.

\vspace{0.1cm}

\begin{defi}{\rm (\cite[Definition 4.3.2]{mat08})} Let $n \ge -1$ be an integer. A topological space $X$ is \textit{$n$-connected} if any continuous map $f: S^{k} \rightarrow X$, where $-1 \le k \le n$, can be continuously extended to a continuous map $g: B^{k+1} \rightarrow X$, that is $g|_{\partial B^{k+1} = S^k} = f$ (here $B^{k+1}$ denotes a $(k+1)$-dimensional closed ball whose boundary is the sphere $S^{k}$). A topological space is $(-1)$-connected if it is non-empty. If the space $X$ is $n$-connected , but not $(n+1)$-connected, we write $\mbox{conn} (X)=n$.

\end{defi}

\vspace{0.1cm}

\begin{teo}{\rm (\cite[p. 332]{Bla17})}\label{teo:3.9}
 Let $X$ and $Y$ be topological spaces. Then 
 \begin{eqnarray}\mbox{conn} (X \ast Y) \ge  \mbox{conn} (X) + \mbox{conn} (Y) +2 \, . \nonumber \end{eqnarray} 
 \end{teo}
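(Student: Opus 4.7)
The plan is to reduce the claim to a homological statement via a standard open cover of the join, read off the desired vanishing from Mayer--Vietoris combined with the K\"unneth formula, and then convert homology vanishing to homotopy vanishing using Hurewicz. Set $m := \mbox{conn}(X)$ and $n := \mbox{conn}(Y)$, which we may assume are both at least $0$; the degenerate cases in which one space is empty or a single point reduce $X \ast Y$ to a cone or to the other factor, where the inequality is immediate.

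First, realize $X \ast Y$ as the quotient of $X \times Y \times [0,1]$ that collapses the slice $t = 0$ along the projection to $X$ and the slice $t = 1$ along the projection to $Y$. The two open sets $U := \{t < 2/3\}$ and $V := \{t > 1/3\}$ cover $X \ast Y$, deformation retract onto the copies of $X$ and $Y$ respectively, and have intersection $U \cap V$ that deformation retracts onto $X \times Y$. Feeding this cover into the Mayer--Vietoris long exact sequence yields
\begin{equation*}
\cdots \to \tilde{H}_k(X \times Y) \xrightarrow{\Phi} \tilde{H}_k(X) \oplus \tilde{H}_k(Y) \to \tilde{H}_k(X \ast Y) \to \tilde{H}_{k-1}(X \times Y) \to \cdots,
\end{equation*}
where $\Phi$ is induced by the two projections. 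The K\"unneth formula exhibits $\Phi$ as split surjective with kernel isomorphic to $\tilde{H}_*(X \wedge Y)$, so the sequence collapses to the identification $\tilde{H}_k(X \ast Y) \cong \tilde{H}_{k-1}(X \wedge Y)$. Every tensor summand $\tilde{H}_p(X) \otimes \tilde{H}_q(Y)$ inside $\tilde{H}_{k-1}(X \wedge Y)$ forces $p \geq m + 1$ and $q \geq n + 1$ to be non-zero (by the standing hypotheses on $X$ and $Y$), so its total degree $p + q = k - 1$ is at least $m + n + 2$. The Tor summands live one degree higher, hence impose $k \geq m + n + 4$. In both cases $\tilde{H}_k(X \ast Y) = 0$ for all $k \leq m + n + 2$.

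Finally, $X \ast Y$ is simply connected whenever both $X$ and $Y$ are non-empty: applying van Kampen to the same cover $\{U, V\}$ (with $U \cap V \simeq X \times Y$ path-connected) shows that $\pi_1(X \ast Y) = 0$ regardless of the fundamental groups of the factors, because any loop can be pushed into the cone direction and killed. With simple connectedness in hand, the Hurewicz theorem upgrades the homology vanishing just established to $\pi_k(X \ast Y) = 0$ for all $k \leq m + n + 2$, which is precisely $\mbox{conn}(X \ast Y) \geq m + n + 2$. The main point requiring care is bookkeeping the Tor contribution in the K\"unneth formula (which is why one cannot simply work with rational coefficients), but as noted above these correction terms only begin to appear in degree $m + n + 4$ and therefore never interfere with the range of interest.
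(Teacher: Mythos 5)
The paper itself gives no proof of this statement---it is quoted from \cite{Bla17}---so your argument has to stand on its own. Its homological core is the standard one and is correct: the Mayer--Vietoris sequence for the cover $\{U,V\}$, the split surjectivity of $\Phi$, the resulting identification of $\tilde{H}_k(X\ast Y)$ with the ``cross terms'' of the K\"unneth decomposition of $\tilde{H}_{k-1}(X\times Y)$, and the degree count for the tensor and Tor summands are all handled properly.

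The gap is in your reduction to $m,n\ge 0$ and in the simple-connectivity step. First, $\mathrm{conn}(X)=-1$ does not mean that $X$ is empty or a single point: by the definition used in the paper it means that $X$ is non-empty but not path-connected, so your list of ``degenerate cases'' does not exhaust the complement of the standing hypothesis $m,n\ge 0$. Second, the assertion that $X\ast Y$ is simply connected whenever both factors are non-empty is false: $S^0\ast S^0=S^1$. Your van Kampen argument genuinely requires $U\cap V\simeq X\times Y$ to be path-connected, i.e.\ both factors path-connected, so the case in which one factor is disconnected (where, say for $m=-1$ and $n\ge 0$, one must still prove that $X\ast Y$ is $(n+1)$-connected) is left unproved. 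This case is not vacuous for the paper: Lemma \ref{lema:index-3-3} applies the theorem to joins of chessboard complexes $\Delta_{|C_i|,r}$, which are disconnected for small parameters (e.g.\ $\Delta_{1,r}$ is $r$ isolated points). The repair is routine---a groupoid version of van Kampen shows $\pi_1(X\ast Y)=1$ as soon as one factor is path-connected, since the nerve of the cover is a tree; alternatively one can reduce a disconnected factor to $S^0$ and use a suspension argument---but as written the proof does not cover all cases of the statement.
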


\vspace{0.1cm}

\begin{teo}{\rm (\cite[Theorem 4.4.1]{mat08})} Let $X$ be a nonempty topological space and let $k \ge 1$. Then $X$ is $k$-connected if and only if it is simply connected (i.e., the fundamental group $\pi_1(X)$ is trivial) and $\tilde{H}_i(X)=0$, for all $i=0,1, \ldots,k$. \end{teo}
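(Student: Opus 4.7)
The plan is to derive this equivalence from the classical Hurewicz theorem together with the standard reformulation of the extension condition in terms of homotopy groups.

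First I would translate the given definition of $k$-connectedness into the more convenient language of vanishing homotopy groups. For $k \geq 0$, a based continuous map $f : S^k \to X$ extends to $B^{k+1}$ if and only if it is null-homotopic, via the standard cone construction (an extension to the ball is essentially a null-homotopy to the image of the center). The case $k = -1$ reduces to nonemptiness and $k = 0$ to path-connectedness. For $k \geq 1$, after choosing a basepoint $x_0$, the extendability of every based map $S^i \to X$ is equivalent to $\pi_i(X, x_0) = 0$. Thus the definition of $k$-connectedness is equivalent to saying that $X$ is nonempty, path-connected, and satisfies $\pi_i(X, x_0) = 0$ for $1 \leq i \leq k$.

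For the forward direction, suppose $X$ is $k$-connected with $k \geq 1$. Then $\pi_1(X) = 0$, so $X$ is simply connected, and $\tilde H_0(X) = 0$ by path-connectedness. I would then apply the Hurewicz theorem iteratively: since $X$ is simply connected with $\pi_i(X) = 0$ for $i \leq k$, the Hurewicz homomorphism $\pi_n(X) \to H_n(X)$ is an isomorphism in the lowest degree of potentially nontrivial homotopy, and iterating from $n = 2$ up to $n = k$ yields $H_n(X) \cong \pi_n(X) = 0$, hence $\tilde H_i(X) = 0$ for all $0 \leq i \leq k$. For the reverse direction, assume $X$ is simply connected and $\tilde H_i(X) = 0$ for $0 \leq i \leq k$; I would run the same Hurewicz argument upward. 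At each stage, knowing that $X$ is $(n-1)$-connected with $n \geq 2$, Hurewicz provides an isomorphism $\pi_n(X) \cong H_n(X) = 0$, so $\pi_n(X) = 0$ and $X$ is $n$-connected. Induction on $n$ from $2$ to $k$ gives $\pi_i(X) = 0$ for $1 \leq i \leq k$, and the reformulation above concludes that $X$ is $k$-connected.

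The main obstacle is really the initial translation rather than the homological step, since Hurewicz is being invoked as a black box. Care must be taken with basepoints when passing between the extension formulation (``every continuous $S^k \to X$ extends'') and the pointed formulation (``$\pi_k(X, x_0)$ vanishes''), but for $k \geq 1$ this is routine because $X$ is already known to be path-connected and basepoints can be transported along paths. Once the two formulations are identified, the theorem is an immediate iterative application of the Hurewicz isomorphism.
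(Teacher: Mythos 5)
Your proof is correct: the paper states this result without proof, citing Matou\v{s}ek's book, and your argument (translating $k$-connectedness into vanishing of $\pi_i(X)$ for $i\le k$ and then running the Hurewicz isomorphism inductively in both directions) is exactly the standard proof given there. The basepoint and path-connectedness caveats you flag are handled appropriately, so nothing is missing.
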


\vspace{0.1cm}

\begin{teo}\label{teo:3.11} Let $X$ topological space. Then, $i (X ) \ge  \mbox{conn} (X) +2.$
\end{teo}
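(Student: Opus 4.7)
The plan is to derive this bound as an immediate combination of Theorem \ref{teo:3.7}(3), which controls $i(X)$ in terms of the vanishing of reduced mod-$p$ cohomology, and Theorem 3.10, which translates connectedness into vanishing of reduced integral homology. So the proof is essentially a one-line chain of implications; no new spectral sequence input is needed beyond what Volovikov's Theorem \ref{teo:3.7} already packages.

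Set $n := \mbox{conn}(X)$. First I would use Theorem 3.10 (together with the Hurewicz theorem for the base case $n=0$, to which the stated version does not literally apply) to conclude that $\tilde{H}_i(X; \mathbb{Z}) = 0$ for every $0 \le i \le n$. The universal coefficient theorem then gives $\tilde{H}^i(X; \mathbb{Z}_p) = 0$ in the same range, i.e.\ $\tilde{H}^j(X;\mathbb{Z}_p)=0$ for all $j < n+1$.

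Next I would invoke Theorem \ref{teo:3.7}(3) with the integer $n+1$ in place of its $n$: the hypothesis $\tilde{H}^j(X;\mathbb{Z}_p)=0$ for $j < n+1$ yields
\[
 i(X) \;\geq\; (n+1) + 1 \;=\; n+2 \;=\; \mbox{conn}(X)+2,
\]
which is the desired inequality.

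It remains to handle the boundary cases of the connectedness scale. If $\mbox{conn}(X)=-1$, then $X$ is merely nonempty and the claim reduces to $i(X)\geq 1$, which holds by definition (the indexing of the Volovikov index starts at $1$, or equivalently $\Lambda^{0}=\mathbb{Z}_p$ survives to $E_\infty^{0,0}$). The case $\mbox{conn}(X)=0$ is covered by the Hurewicz theorem applied to a path-connected space. I do not expect any genuine obstacle: the substance of the theorem is entirely absorbed into the previously cited Theorems \ref{teo:3.7}(3) and 3.10, and the argument is a bookkeeping exercise comparing the numerical conventions in those two statements.
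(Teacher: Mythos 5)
Your proof is correct and follows exactly the route the paper takes: the paper's proof is the one-line remark that the statement is a consequence of the connectedness-to-homology theorem (Theorem 3.10) combined with Theorem \ref{teo:3.7}(3), which is precisely your chain of implications. Your added care with universal coefficients and the boundary cases $\mbox{conn}(X)\in\{-1,0\}$ only fills in details the paper leaves implicit.
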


\vspace{0.1cm}

\textit{Proof.} It is a consequence of Theorem $2.10$ and Theorem $2.7$ $(3)$.

\hspace{14.3cm} $\square$

\vspace{0.1cm}

\begin{teo}{\rm (\cite{blvz})} \label{teo:3.12}
Let $m, n \ge 1$ be integers. Then: 
\begin{eqnarray} \mbox{conn} (\Delta_{m,n}) = \mbox{min} \left\{ m,n, \left \lfloor{\frac{m+n+1}{3}} \right \rfloor \right\}-2.   \nonumber 
\end{eqnarray}

\end{teo}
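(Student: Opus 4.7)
The plan is to establish the equality by proving two matching inequalities. Write $\nu(m,n) := \min\{m,n,\lfloor(m+n+1)/3\rfloor\}-2$ for the claimed connectivity.

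For the lower bound $\mathrm{conn}(\Delta_{m,n}) \ge \nu(m,n)$, I would induct on $m+n$, with small cases verified by hand (for instance $\Delta_{1,n}$ is a discrete set of $n$ points, giving connectivity $-1$, which agrees with $\nu(1,n) = -1$). For the inductive step, fix a vertex $v=(1,1)$ and use the decomposition $\Delta_{m,n} = \mathrm{st}(v) \cup \mathrm{del}(v)$, in which $\mathrm{st}(v)$ is a cone and therefore contractible, $\mathrm{lk}(v) \cong \Delta_{m-1,n-1}$, and $\mathrm{del}(v)$ itself decomposes as $\Delta_{m-1,n} \cup \Delta_{m,n-1}$ (delete the first row or delete the first column, respectively) with intersection $\Delta_{m-1,n-1}$. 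Two applications of a Mayer--Vietoris-type connectivity gluing lemma, together with the inductive hypotheses on $\Delta_{m-1,n-1}$, $\Delta_{m-1,n}$, and $\Delta_{m,n-1}$, should yield the desired bound. A case analysis based on which of the three arguments realizes the minimum in $\nu(m,n)$ is required, using in particular that $\lfloor (m+n+1)/3\rfloor$ increments by $1$ every time $m+n$ increases by $3$.

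For the upper bound $\mathrm{conn}(\Delta_{m,n}) \le \nu(m,n)$, I would exhibit a nontrivial reduced homology class in dimension $\nu(m,n)+1$. The cap $\min(m,n)-2$ is essentially dimensional, since $\dim \Delta_{m,n} = \min(m,n)-1$; one still has to verify non-vanishing of the top homology in the relevant regime, which can be done by an Euler characteristic computation or by producing a fundamental class supported on the set of perfect matchings. The more subtle bound $\lfloor (m+n+1)/3\rfloor - 2$ requires an explicit combinatorial cycle: following Garst's construction in the balanced case $m=n$, one forms a signed sum of maximal simplices supported on a well-chosen family of ``triples'' whose boundary cancels via sign-reversing involutions, and then verifies nontriviality by pairing against a $\mathcal{G}_m \times \mathcal{G}_n$-equivariant cocycle.

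The main obstacle will be the tightness argument for the floor bound: proving that the cycle just described is non-bounding is not formal, and in the original argument of \cite{blvz} it rests on a detailed spectral-sequence (or explicit chain-level) computation that is orthogonal to the inductive machinery used for the lower bound. The inductive step itself, while conceptually clean, demands careful bookkeeping at the boundary values of $m$ and $n$ where the argument realizing the minimum of $\nu(m,n)$ switches between the three expressions.
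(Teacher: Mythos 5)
The paper does not actually prove this statement; it is quoted from \cite{blvz}, so there is no internal argument to compare against and your sketch must be judged on its own terms. There it has a concrete error in the lower-bound induction: the deletion $\mathrm{del}((1,1))$ is \emph{not} $\Delta_{m-1,n}\cup\Delta_{m,n-1}$. That union consists of the matchings avoiding row $1$ or avoiding column $1$, whereas $\mathrm{del}((1,1))$ also contains matchings such as $\{(1,2),(2,1)\}$ which use both row $1$ and column $1$ while merely avoiding the single cell $(1,1)$. (The identification $\mathrm{lk}((1,1))\cong\Delta_{m-1,n-1}$ is fine, as is the contractibility of the star.) The argument of \cite{blvz} instead exploits the fact that for $m\le n$ every maximal face meets row $1$, so $\Delta_{m,n}=\bigcup_{j=1}^{n}\mathrm{st}((1,j))$ is a cover by contractible subcomplexes whose $t$-fold intersections, for $t\ge 2$, are smaller chessboard complexes $\Delta_{m-1,n-t}$; a nerve-type gluing induction over this cover then yields $\mathrm{conn}(\Delta_{m,n})\ge\nu(m,n)$. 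Your two-piece Mayer--Vietoris step would have to be replaced by such a cover (or an equivalent device) before the induction can run.

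On the upper bound you correctly identify the exhibition of a nonbounding cycle in dimension $\nu(m,n)+1$ as the delicate point, but the proposal leaves it open (``not formal''), and in fact this half is not fully proved in \cite{blvz} either: the tightness of the $\lfloor(m+n+1)/3\rfloor$ bound in all cases was only established later (Friedman--Hanlon, with Shareshian--Wachs handling the torsion refinements), so the equality as stated aggregates several papers. For the purposes of the present article this does not matter: Lemma \ref{lema:index-3-3} invokes only the inequality $\mathrm{conn}(\Delta_{m,n})\ge\min\bigl\{m,n,\lfloor(m+n+1)/3\rfloor\bigr\}-2$, so you could, and should, restrict your proof to the lower bound, which is the only direction actually used.
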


\vspace{0.2cm}

\section{Colored Tverberg theorem with $p^{n}-1$ faces}
\label{sec:main}

\vspace{0.2cm}

In this section we prove the main result of the paper (Theorem \ref{teo:first}). First, we state and prove two lemmas that are needed for the proof.

\vspace{0.1cm}

\begin{defi}{\rm (\cite{Vol09})} Let $X$ be a $G$-space, where $G$ is a finite group, and let $f: X \rightarrow Y$ be a continuous map. For $2 \le y \le |G|$, we set \begin{eqnarray} A(f,y)= \{ x \in X  \mid f(g_1 x) = \cdots = f(g_y x)  \mbox{ for some distinct } g_i \in G \}.\nonumber \end{eqnarray}

\end{defi}

\vspace{0.2cm}

\begin{lemma}

Let $r= p^{n} \ge 2$ be a prime power and let $d \ge 1$, $1 \le k \le d$, $ 2 \le q \le r$ be integers. For every continuous map $f: \Delta \rightarrow \mathbb{R}^{d}$ and every coloring $(C_1, \ldots , C_{k+1})$ of the vertex set $V(\Delta)$ by $(k+1)$ colors, define the continuous map as follows: \begin{eqnarray} h : (R_{(C_1, \ldots, C_{k+1})})_{\Delta(2)}^{\times p^{n}} \rightarrow \mathbb{R}^{d}\nonumber \\ \hspace{1.5cm} (x_1, \ldots ,x_{p^{n}}) \mapsto f(x_1).\nonumber \end{eqnarray}

If $A(h,q) \neq \emptyset$ then there exists $q$ pairwise disjoint rainbow faces $\sigma_1, \ldots , \sigma_q$ of $\Delta$ such that \begin{eqnarray} f(\sigma_1) \cap \cdots \cap f(\sigma_q) \neq \emptyset \, .\nonumber \end{eqnarray}

\end{lemma}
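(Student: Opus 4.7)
The plan is to directly translate the hypothesis $A(h,q) \neq \emptyset$ into the geometric conclusion by unpacking the permutation action on the deleted product. Following the setup used throughout the paper, I would work with the group $G = (\mathbb{Z}_p)^n \subseteq \mathcal{G}_{p^n}$ embedded via its regular (free and transitive) action on $[p^n]$, acting on $K^{\times p^n}_{\Delta(2)}$ (with $K := R_{(C_1, \ldots, C_{k+1})}$) by permutation of coordinates as in the definition of the deleted product. The hypothesis $A(h,q) \neq \emptyset$ then furnishes a point $x = (x_1, \ldots, x_{p^n}) \in K^{\times p^n}_{\Delta(2)}$ together with distinct elements $g_1, \ldots, g_q \in G$ satisfying $h(g_1 \cdot x) = \cdots = h(g_q \cdot x)$.

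Next, I would combine the formula $h(y_1, \ldots, y_{p^n}) = f(y_1)$ with the definition of the action to compute $h(g_j \cdot x) = f(x_{g_j^{-1}(1)})$. Because the regular action of $G$ on $[p^n]$ is free, the evaluation map $g \mapsto g^{-1}(1)$ is a bijection $G \to [p^n]$, so the indices $i_j := g_j^{-1}(1)$ are pairwise distinct. We thus obtain $q$ distinct coordinate positions and a common value $v := f(x_{i_1}) = \cdots = f(x_{i_q})$ in $\mathbb{R}^d$.

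Finally, for each $j$ let $\sigma_j$ be the carrier of $x_{i_j}$ in $K$, i.e.\ the smallest face of $K$ whose relative interior contains $x_{i_j}$. Since $K = C_1 \ast \cdots \ast C_{k+1}$, every face of $K$ meets each color class $C_i$ in at most one vertex, so each $\sigma_j$ is by construction a rainbow face of $\Delta$. The deleted product condition $\Delta(2)$ imposed on the tuple $(x_1, \ldots, x_{p^n})$ forces the carriers of distinct coordinates to be disjoint, hence $\sigma_1, \ldots, \sigma_q$ are pairwise disjoint. Since $v \in f(\sigma_j)$ for every $j$, we conclude $v \in f(\sigma_1) \cap \cdots \cap f(\sigma_q) \neq \emptyset$, finishing the proof.

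I expect the only non-formal point to be the passage from distinct group elements $g_j$ to distinct coordinate indices $i_j = g_j^{-1}(1)$: this step is precisely what forces one to take $G$ to act freely on $[p^n]$, and it is why $G = (\mathbb{Z}_p)^n$ (in its regular representation) is the appropriate ambient group rather than the full symmetric group $\mathcal{G}_{p^n}$. Once this identification is made, the rest of the argument is a straightforward translation between the language of the deleted product and that of pairwise disjoint rainbow faces.
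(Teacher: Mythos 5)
Your proposal is correct and follows essentially the same route as the paper's own proof: extract the point of $A(h,q)$, use the freeness of the $(\mathbb{Z}_p)^n$-action to turn the $q$ distinct group elements into $q$ distinct coordinate indices with a common image under $f$, and read off pairwise disjoint rainbow supports from the definition of the $2$-wise deleted product. You actually spell out the one step the paper leaves implicit (why distinct $g_j$ yield distinct indices $i_j = g_j^{-1}(1)$), so no further comparison is needed.
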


\vspace{0.4cm}

\textit{Proof.}  Choose $ (x_1, \ldots, x_{p^{n}}) \in A(h,q) \neq \emptyset.$

\vspace{0.2cm}

Then there exist distincts elements $g_1,\ldots, g_q \in (\mathbb{Z}_p)^{n}$ such that \begin{eqnarray} h(g_1(x_1,\ldots , x_{p^{n}})) = \cdots = h(g_q(x_1,\ldots , x_{p^{n}}))\, .\nonumber \end{eqnarray}

\vspace{0.2cm}

Therefore there exist $q$ elements $x_{i_1}, \ldots, x_{i_q} \in  \{x_{1}, \ldots, x_{p^{n}}\}$ such that $f(x_{i_1})= \cdots = f(x_{i_q})$, where $x_{i_1} \in \sigma_{i_1}, \ldots , x_{i_q} \in \sigma_{i_q}$ ($\sigma_{i_{m}}$ is a support of $x_{i_m}$, for every $m \in [q]$).

\vspace{0.2cm}

By the definition of the configuration space $(R_{(C_1, \ldots, C_{k+1})})_{\Delta(2)}^{\times p^{n}}$, there exist $q$ pairwise disjoint,  non-empty rainbow  faces $\sigma_{i_1}, \ldots ,\sigma_{i_q}$  such that \begin{eqnarray} f(\sigma_{i_1}) \cap \cdots \cap f(\sigma_{i_q}) \neq \emptyset\, .\nonumber \end{eqnarray}



\begin{lemma}
\label{lema:index-3-3}
Let $d \ge 1$, $1 \le k \le d$, $0 \le m \le k+1$ be integers and let $r=p^{n} \ge 2$ be a prime power. Let $(C_1, \ldots , C_{k+1})$ be a coloring of the vertex set $V(\Delta)$ by $(k+1)$ colors, where we have $|C_i| \ge 2r-1$, $\forall i =1, \ldots, m$, $|C_i| \ge 2r-4$, $\forall i=m+1, \ldots, k+1$ and $m \ge (d-k)(r-1)$. Then: \begin{eqnarray}i((R_{(C_1, \ldots, C_{k+1})})_{\Delta(2)}^{\times p^{n}}) \ge d (p^{n}-1). \nonumber \end{eqnarray}
\end{lemma}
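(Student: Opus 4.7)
The plan is to first compute the Volovikov index of the deleted join $R_{\Delta(2)}^{*r}$ (with $r = p^n$) via the classical chessboard identification, and then to transfer this bound to the deleted product $R_{\Delta(2)}^{\times r}$ using the closed-cover estimate of Theorem 3.7(4). The starting point is the standard $G$-equivariant homeomorphism
\[
R_{\Delta(2)}^{*r} \;\cong\; \Delta_{|C_1|, r} \ast \cdots \ast \Delta_{|C_{k+1}|, r}
\]
(with $G=(\mathbb{Z}_p)^n$ acting by permuting the $r$ columns in each chessboard factor), which follows from the observation that, for $R=C_1\ast\cdots\ast C_{k+1}$, the disjointness constraint on $r$ participating rainbow faces decouples across the color classes and in each color class coincides with the chessboard constraint.

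Combining Theorem 3.12 with Theorem 3.9, I would bound the connectedness of each chessboard factor and of the join. Theorem 3.12 gives $\mbox{conn}(\Delta_{|C_i|, r}) \ge r - 2$ when $|C_i|\ge 2r-1$ and $\mbox{conn}(\Delta_{|C_i|, r})\ge r-3$ when $|C_i|\ge 2r-4$, and iterating Theorem 3.9 yields
\[
\mbox{conn}(R_{\Delta(2)}^{*r}) \;\ge\; m(r-2) + (k+1-m)(r-3) + 2k.
\]
Substituting the hypothesis $m\ge (d-k)(r-1)$ and simplifying gives $\mbox{conn}(R_{\Delta(2)}^{*r})\ge d(r-1) + r - 3$, so Theorem 3.11 yields
\[
i(R_{\Delta(2)}^{*r}) \;\ge\; d(r-1) + r - 1.
\]

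To transfer this bound to the deleted product, I use the natural $G$-equivariant projection $p:R_{\Delta(2)}^{*r}\to\Delta^{r-1}$, $\sum_j t_j x_j \mapsto (t_1,\dots,t_r)$. Choose a closed $G$-invariant ball $\bar{U}\subset\mathring{\Delta}^{r-1}$ around the unique $G$-fixed barycenter, and set $B:=p^{-1}(\bar{U})$ and $A:=p^{-1}(\overline{\Delta^{r-1}\setminus U})$. Over the open simplex $p$ is a trivial $G$-product, so $B\cong R_{\Delta(2)}^{\times r}\times \bar{U}$ as $G$-spaces; since $\bar{U}$ is $G$-equivariantly contractible to the barycenter, $B$ is $G$-equivariantly homotopy equivalent to $R_{\Delta(2)}^{\times r}$, so $i(B)=i(R_{\Delta(2)}^{\times r})$. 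On the other side, $A$ deformation retracts $G$-equivariantly onto $p^{-1}(\partial\Delta^{r-1})$, which in turn maps $G$-equivariantly onto $\partial\Delta^{r-1}\cong S^{r-2}$; with $r=p^n$ and $G=(\mathbb{Z}_p)^n$ acting on $[r]$ by left translation the action on $\partial\Delta^{r-1}$ has no fixed points, so Theorem 3.7(2) gives $i(\partial\Delta^{r-1})=i'(\partial\Delta^{r-1})=r-1$, and Theorem 3.7(1) forces $i'(A)\le r-1$. Applying Theorem 3.7(4) to the closed $G$-invariant cover $R_{\Delta(2)}^{*r}=A\cup B$ then gives
\[
i(R_{\Delta(2)}^{*r}) \;\le\; i'(A) + i(B) \;\le\; (r-1) + i(R_{\Delta(2)}^{\times r}),
\]
and the desired bound $i(R_{\Delta(2)}^{\times r})\ge d(r-1)$ follows.

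The main technical obstacle I foresee is the cover step: verifying the $G$-equivariant homotopy equivalence $B\simeq_G R_{\Delta(2)}^{\times r}$ and the $G$-equivariant deformation retraction of $A$ onto $p^{-1}(\partial\Delta^{r-1})$. The delicate point is that the $G$-action on the contractible ball $\bar{U}$ is not trivial — it permutes coordinates while fixing only the barycenter — but $\bar{U}$ is nonetheless $G$-equivariantly contractible to this fixed point, which is precisely what lets the index of $B$ be identified with that of $R_{\Delta(2)}^{\times r}$. Once these equivariant details are in place, the connectedness computation of the deleted join and the use of Theorem 3.7(4) are routine.
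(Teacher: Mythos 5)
Your proposal is correct and follows essentially the same route as the paper: the same chessboard identification and connectivity bound giving $i\bigl((R)_{\Delta(2)}^{\ast r}\bigr)\ge (d+1)(r-1)$, the same decomposition of the deleted join into a piece equivalent to the deleted product and a piece admitting a $G$-map to $S^{r-2}$, and the same application of Theorem \ref{teo:3.7}. The only difference is cosmetic: you thicken the central fiber to a closed tube $p^{-1}(\bar U)$ so that both pieces of the cover are closed, whereas the paper takes $B$ to be exactly the barycentric fiber $\{\lambda_1=\cdots=\lambda_{p^n}=1/p^n\}$ and $A$ its (open) complement.
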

\begin{proof}
Note that 
\begin{eqnarray} 
(R_{(C_1, \ldots , C_{k+1})})_{\Delta(2)}^{\ast p^{n}} = A \cup B \mbox{,}\nonumber 
\end{eqnarray} 
where 
\begin{eqnarray}
 A= \left\{ \lambda_1 x_1 + \cdots +\lambda_{p^{n}} x_{p^{n}}\in (R_{(C_1, \ldots , C_{k+1})})_{\Delta(2)}^{\ast p^{n}} \mid (\exists i \in [ p^{n}])\, \lambda_i \neq \nonumber \frac{1}{p^{n}}  \right \}
\end{eqnarray}
\begin{eqnarray}  
B= \left\{ \lambda_1 x_1 + \cdots +\lambda_{p^{n}}x_{p^{n}} \in (R_{(C_1, \ldots , C_{k+1})})_{\Delta(2)}^{\ast p^{n}} \mid \lambda_1 = \cdots = \lambda_{p^{n}} =\frac{1}{p^{n}} \right \}.\nonumber
\end{eqnarray}

It is not difficult to see that $B$ is isomorphic to $(R_{(C_1, \ldots , C_{k+1})})_{\Delta(2)}^{\times p^{n}} $.
It follows from Theorem \ref{teo:3.7} (4) that   
\begin{eqnarray} 
i((R_{(C_1, \ldots , C_{k+1})})_{\Delta(2)}^{\ast p^{n}}) \le i'(A) +i(B) = i'(A) + i((R_{(C_1, \ldots , C_{k+1})})_{\Delta(2)}^{\times p^{n}})\, .  \nonumber
\end{eqnarray}
We want to estimate the indices $i((R_{(C_1, \ldots , C_{k+1})})_{\Delta(2)}^{\ast p^{n}})$ and $i'(A)$.
In light of the isomorphism 
\begin{eqnarray} (R_{(C_1, \ldots, C_{k+1})})_{\Delta(2)}^{\ast r} \cong \Delta_{|C_1|,r} \ast \cdots \ast \Delta_{|C_{m}|,r} \ast \Delta_{|C_{m+1}|,r} \ast \cdots \ast \Delta_{|C_{k+1}|,r}. \nonumber 
\end{eqnarray}
 we obtain, as a consequence of Theorems \ref{teo:3.9} and \ref{teo:3.12}, 
 \begin{eqnarray} 
 \mbox{conn} ((R_{(C_1, \ldots, C_{k+1})})_{\Delta(2)}^{\ast r}) \ge m (r-2)+ (k+1-m)(r-3)+2k. \nonumber 
 \end{eqnarray}
It follows from Theorem \ref{teo:3.11}  that 
\begin{equation} \begin{split}
i((R_{(C_1, \ldots, C_{k+1})})_{\Delta(2)}^{\ast r})) &\ge [m (r-2)+ (k+1-m)(r-3)+2k]+2\\ &= (k+1)(r-1)+m \, . \nonumber
\end{split}
\end{equation}
Since $m \ge (d-k)(r-1)$, we have  
\begin{eqnarray} 
i((R_{(C_1, \ldots, C_{k+1})})_{\Delta(2)}^{\ast r}) \ge  (d+1)(r-1) \, .  \nonumber 
\end{eqnarray}
In order to find a bound for $i'(A)$  let us consider the following $(\mathbb{Z}_{p})^{n}$-equivariant map \begin{eqnarray} \phi : A \longrightarrow \mathbb{R}^{p^{n}} \setminus \Delta( \mathbb{R}^{p^{n}}).  \nonumber \\ \lambda_1 x_1 +\cdots + \lambda_{p^{n}} x_{p^{n}} \longmapsto (\lambda_1, \ldots ,\lambda_{p^{n}})\nonumber \end{eqnarray} and \begin{eqnarray} \Pi: \mathbb{R}^{p^{n}} \setminus \Delta( \mathbb{R}^{p^{n}}) \rightarrow (\Delta(\mathbb{R}^{p^{n}}))^{\perp} \setminus \{0\} \rightarrow S( (\Delta(\mathbb{R}^{p^{n}}))^{\perp}) \mbox{,}\nonumber \end{eqnarray} where $\Pi$ is a composition of the projection and deformation retraction.

Here $\Delta(\mathbb{R}^{p^{n}})= \{ (x_1, \ldots, x_{p^{n}}) \in \mathbb{R}^{p^{n}} \mid x_1= \cdots = x_{p^{n}} \}$ is the diagonal subspace of $\mathbb{R}^{p^{n}}$, while $S(V)$  the unit sphere in the real vector space $V$.

\medskip 

It follows that the composition \begin{eqnarray}  \Pi \circ \phi : A \longrightarrow S( (\Delta(\mathbb{R}^{p^{n}}))^{\perp}) \cong S^{p^{n}-2} \nonumber \end{eqnarray} is a $(\mathbb{Z}_p)^{n}$-equivariant map. By Theorem \ref{teo:3.7}, (1) and (2), we conclude that \begin{eqnarray} i'(A) \le i'(S^{p^{n}-2}) = p^{n}-1 \nonumber \end{eqnarray}
and as an immediate consequence, $i ((R_{(C_1, \ldots , C_{k+1})})_{\Delta(2)}^{\times p^{n}}) \ge d (p^{n}-1)$. 
\end{proof}

\begin{teo}{\rm (\cite[Theorem 4]{Vol09})} \label{teo:Vol-4}
Let $X$ be a connected $G$-space, where $G=(\mathbb{Z}_p)^{n}$ is a $p$-torus and $2 \le y \le p^n$, $y \neq 3$. Assume the  inequality $i(X) \ge (m-1) (p^n -1) +y$. Then $A(f,y) \neq \emptyset$ for any continuous map $f: X \rightarrow \mathbb{R}^{m}$.

\end{teo}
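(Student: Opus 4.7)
The plan is to use the standard configuration space / test map scheme together with the monotonicity of the Volovikov index. Suppose for contradiction that $A(f,y) = \emptyset$; I will derive an upper bound on $i(X)$ that contradicts the hypothesis.

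First, construct the $G$-equivariant \emph{test map} $F : X \to (\mathbb{R}^m)^G$ defined by $F(x) = (f(g \cdot x))_{g \in G}$, where $G = (\mathbb{Z}_p)^n$ acts on $(\mathbb{R}^m)^G$ by permuting coordinates according to its regular action on itself. The assumption $A(f,y) = \emptyset$ translates precisely to $F(X)$ avoiding the \emph{$y$-wise thick diagonal}
$$W_y = \bigl\{(z_g)_{g\in G}\in (\mathbb{R}^m)^G : z_{g_1}=\cdots=z_{g_y}\text{ for some } y \text{ distinct } g_i\in G\bigr\}.$$
Hence $F$ factors through $(\mathbb{R}^m)^G \setminus W_y$, giving a $G$-equivariant map $X \to (\mathbb{R}^m)^G \setminus W_y$. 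By monotonicity of the index under equivariant maps (Theorem~\ref{teo:3.7}(1)), one gets $i(X) \le i\bigl((\mathbb{R}^m)^G \setminus W_y\bigr)$.

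The second step, the heart of the proof, is to establish the estimate
$$i\bigl((\mathbb{R}^m)^G \setminus W_y\bigr) \;\le\; (m-1)(p^n-1) + y - 1.$$
One can first project off the full diagonal $\Delta = \mathbb{R}^m \cdot \mathbf{1}$ and radially deform, obtaining an equivariant map to $S(\widetilde V^{\oplus m}) \setminus W_y'$, where $\widetilde V = \mathbb{R}[G]/\mathbb{R}\mathbf{1}$ is the reduced regular representation (of real dimension $p^n - 1$) and $W_y'$ is the corresponding subspace arrangement image. The ambient sphere has dimension $m(p^n-1)-1$, and the content of the bound is precisely to identify the codimension contribution of $W_y'$. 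This is done by analyzing the Leray--Serre spectral sequence of the Borel fibration $\bigl((\mathbb{R}^m)^G \setminus W_y\bigr)_G \to BG$, either via a Goresky--MacPherson type decomposition of the cohomology of the arrangement complement, or by constructing an explicit equivariant map to a well-understood configuration space (such as a join of chessboard complexes or a deleted join whose index can be computed directly in the spirit of Lemma~\ref{lema:index-3-3}).

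The main obstacle is precisely the index estimate in the second paragraph: the cohomology of a $G$-invariant arrangement complement with $G = (\mathbb{Z}_p)^n$ is delicate, and one must extract enough information from the spectral sequence to locate a non--zero-divisor in the kernel of $\Lambda^* \to E_{r+1}^{*,0}$ at the required page. The hypothesis $y \neq 3$ enters at this stage: the local $G$-representation structure of $W_y'$ at the relevant strata is exceptional for $y=3$ and the usual transfer/spectral-sequence computation fails to produce the desired element. Combining the two steps, $A(f,y)=\emptyset$ would yield $i(X) \le (m-1)(p^n-1)+y-1$, contradicting the hypothesis $i(X) \ge (m-1)(p^n-1)+y$; hence $A(f,y) \neq \emptyset$.
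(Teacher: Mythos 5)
This statement is not proved in the paper at all: it is imported verbatim as \cite[Theorem 4]{Vol09}, so there is no internal argument to compare yours against. Judged on its own terms, your proposal correctly sets up the standard Cohen--Lusk/configuration-space reduction (and this is indeed the general spirit of Volovikov's argument): assuming $A(f,y)=\emptyset$, the map $F(x)=(f(gx))_{g\in G}$ is a $G$-equivariant map into the complement of the $y$-wise thick diagonal $W_y$, and monotonicity of the index (Theorem~\ref{teo:3.7}(1)) converts an upper bound on $i\bigl((\mathbb{R}^m)^G\setminus W_y\bigr)$ into the desired contradiction. That first step is sound, and your sanity check is consistent at the extreme $y=p^n$, where the complement retracts to the sphere $S\bigl(\widetilde V^{\oplus m}\bigr)$ of dimension $m(p^n-1)-1$ and the claimed bound specializes to $m(p^n-1)$.

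The genuine gap is the entire second step. The inequality $i\bigl((\mathbb{R}^m)^G\setminus W_y\bigr)\le (m-1)(p^n-1)+y-1$ is the whole content of the theorem, and you assert it rather than prove it: you list two possible mechanisms (a Goresky--MacPherson type analysis of the $k$-equal arrangement complement, or an explicit equivariant map to a computable configuration space) without carrying out either, and you yourself label this ``the main obstacle.'' Likewise, the role of the hypothesis $y\neq 3$ is described only impressionistically (``the local $G$-representation structure \dots is exceptional''); in fact it traces back to the connectivity formulas for the relevant arrangement complements and chessboard-type complexes, where the $\lfloor\cdot\rfloor$ term degenerates precisely at $3$ --- compare the exceptional behaviour of $\lfloor (m+n+1)/3\rfloor$ in Theorem~\ref{teo:3.12} and the Remark following Theorem~\ref{teo:Vol-4} that the case $y=3$ survives only for small $r$. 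Without an actual derivation of the index bound and of where $y\neq 3$ is used, the proposal is a correct strategy outline, not a proof; to complete it you would need to reproduce Volovikov's computation of the index of the $y$-equal arrangement complement (or cite the Bj\"orner--Welker type connectivity results it rests on).
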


\vspace{0.1cm}

\noindent{\textbf{Remark.}} Theorem \ref{teo:Vol-4} is also true for $y=3$ and $r=3,4,5$.

\vspace{0.2cm}

\begin{teo}\label{teo:first}
Let $d \ge 1$, $1 \le k \le d$, $0 \le m \le k+1$ be integers, and let $r= p^{n} \ge 2$ be a prime power. For every continuous map $f: \Delta \rightarrow \mathbb{R}^{d}$, and every coloring $(C_1, \ldots , C_{k+1})$ of the vertex set $V(\Delta)$ by $(k+1)$ colors, such that $|C_i| \ge 2r-1$, $\forall i=1, \cdots , m$, $|C_i| \ge 2r-4$, $\forall i=m+1, \cdots , k+1 $ and $m \ge (d-k)(r-1)$, there exist $q=r-1=p^{n}-1$ pairwise disjoint rainbow faces $\sigma_1, \ldots , \sigma_q$ such that: \begin{eqnarray} f(\sigma_1) \cap \cdots \cap f(\sigma_q) \neq \emptyset.\nonumber \end{eqnarray}
\end{teo}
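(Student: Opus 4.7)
The plan is to combine the two preceding lemmas via Volovikov's partial coincidence theorem (Theorem \ref{teo:Vol-4}) in the standard configuration space / test map framework. Given the map $f:\Delta \to \mathbb{R}^d$, I would form the configuration space
\[ X := (R_{(C_1,\ldots,C_{k+1})})_{\Delta(2)}^{\times p^{n}} \]
equipped with the natural action of $G = (\mathbb{Z}_p)^n$, viewed as a subgroup of $\mathcal{G}_{p^n}$ via the regular representation on the $p^n$ factors. Attached to $f$ is the $G$-space map $h : X \to \mathbb{R}^d$ defined by $h(x_1,\ldots,x_{p^n}) = f(x_1)$, exactly as in the first lemma of this section.

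Next, Lemma \ref{lema:index-3-3} applies directly under the hypotheses of the theorem (namely $|C_i|\ge 2r-1$ for $i\le m$, $|C_i|\ge 2r-4$ for $i>m$, and $m\ge (d-k)(r-1)$) and yields the Volovikov-index bound $i(X) \ge d(p^n-1)$. This bound is precisely what is needed to trigger Volovikov's theorem: taking the ambient dimension to be $d$ and the coincidence parameter to be $y=q=p^n-1$, the hypothesis of Theorem \ref{teo:Vol-4} reads $i(X) \ge (d-1)(p^n-1) + (p^n-1) = d(p^n-1)$, which is exactly the conclusion of Lemma \ref{lema:index-3-3}.

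Consequently, $A(h,q)\neq\emptyset$, and invoking the first lemma of this section produces $q = p^n-1$ pairwise disjoint rainbow faces $\sigma_1,\ldots,\sigma_q$ of $\Delta$ with $f(\sigma_1)\cap\cdots\cap f(\sigma_q)\neq\emptyset$, which is the desired conclusion. The excluded case $y=3$ in Volovikov's theorem corresponds to $p^n=4$, and is covered by the remark immediately following Theorem \ref{teo:Vol-4}; connectedness of $X$ is automatic since the index estimate forces high connectivity.

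Since the plan reduces the theorem to routine application of Lemma \ref{lema:index-3-3} and Theorem \ref{teo:Vol-4}, the main obstacle has already been absorbed into Lemma \ref{lema:index-3-3}. There, the delicate step is the sharp index computation: one decomposes the deleted join as $X^{\ast p^n} = A\cup B$ with $B\cong X$, estimates the connectivity of the deleted join via the chessboard complex connectivity formula (Theorem \ref{teo:3.12}) and the join bound (Theorem \ref{teo:3.9}), and bounds $i'(A)$ using the $G$-equivariant map from $A$ to the sphere $S((\Delta(\mathbb{R}^{p^n}))^\perp)\cong S^{p^n-2}$; the subtractivity property of the Volovikov index (Theorem \ref{teo:3.7}(4)) then produces the required lower bound. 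Once that is in hand, everything else slots into place mechanically.
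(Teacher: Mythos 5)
Your proposal is correct and follows essentially the same route as the paper's own proof: the same configuration space $X=(R_{(C_1,\ldots,C_{k+1})})_{\Delta(2)}^{\times p^{n}}$ and test map $h$, the index bound from Lemma \ref{lema:index-3-3}, and Theorem \ref{teo:Vol-4} with ambient dimension $d$ and $y=q=p^{n}-1$, so that the required hypothesis $i(X)\ge (d-1)(p^n-1)+q=d(p^n-1)$ is exactly what the lemma supplies. The only loose point is your claim that connectedness of $X$ is ``automatic from the index estimate'' (the implication $i(X)\ge \mathrm{conn}(X)+2$ goes the other way, and the index bound was computed for the deleted join, not the product); the paper simply asserts connectedness of $X$ directly, so this does not affect the substance of the argument.
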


\begin{proof}   It follows from Lemma $3.2$ that if $A(h,q)$ is non-empty  then there exist $q$ pairwise disjoint, rainbow, non-empty faces $\sigma_1, \ldots , \sigma_q$ such that: \begin{eqnarray} f(\sigma_1) \cap \cdots \cap f(\sigma_q) \neq \emptyset\, .\nonumber \end{eqnarray}
Therefore it remains to be shown that $A(h,q) \neq \emptyset$.

On the other hand this is an immediate consequence of Theorem $3.4$, applied to the $G$-space  $X= (R_{(C_1, \ldots, C_{k+1})})_{\Delta(2)}^{\times p^{n}}$ and the map  $h:(R_{(C_1, \ldots, C_{k+1})})_{\Delta(2)}^{\times p^{n}} \rightarrow \mathbb{R}^{d}$ (as in  Lemma $3.2$), where  $y=q$. Indeed, $(R_{(C_1, \ldots, C_{k+1})})_{\Delta(2)}^{\times p^{n}}$ is connected and $i((R_{(C_1, \ldots, C_{k+1})})_{\Delta(2)}^{\times p^{n}}) \ge d (p^{n}-1)= (m-1) (p^{n}-1)+y $ (by  Lemma $3.3$).
This observation completes the proof of the theorem.
\end{proof}

\begin{cor}\label{cor:first}
Let $d \ge 1$ be an integer, and let $r= p^{n} \ge 2$ be a prime power. For every continuous map $f: \Delta \rightarrow \mathbb{R}^{d}$, and every coloring $(C_1, \ldots , C_{d+1})$ of the vertex set $V(\Delta)$ by $(d+1)$ colors, with each color of size at least $2r-4$, there exist $q=r-1=p^{n}-1$ pairwise disjoint rainbow faces $\sigma_1, \ldots , \sigma_q$ such that: \begin{eqnarray} f(\sigma_1) \cap \cdots \cap f(\sigma_q) \neq \emptyset.\nonumber \end{eqnarray}
\end{cor}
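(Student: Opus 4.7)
The plan is to derive Corollary \ref{cor:first} as an immediate specialization of Theorem \ref{teo:first} (indeed, the introduction already advertises this corollary as the case ``$k=d$, $m=0$''). The strategy is simply to choose the auxiliary parameters $k$ and $m$ of the theorem so that its hypotheses collapse onto the weaker hypotheses of the corollary.

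First, I take $k = d$. Under this choice the auxiliary inequality $m \ge (d-k)(r-1)$ reduces to $m \ge 0$, so it imposes no constraint at all. In particular I am free to choose $m = 0$, which is the minimal admissible value allowed by the range $0 \le m \le k+1$ in the statement of Theorem \ref{teo:first}.

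Next, with $m = 0$ the condition $|C_i|\ge 2r-1$ for $i=1,\dots,m$ is vacuous, while the condition $|C_i|\ge 2r-4$ for $i=m+1,\dots,k+1$ becomes $|C_i|\ge 2r-4$ for all $i=1,\dots,d+1$. This is precisely the hypothesis of Corollary \ref{cor:first}. The conclusion of Theorem \ref{teo:first} under these choices is exactly the existence of $q=p^n-1$ pairwise disjoint rainbow faces $\sigma_1,\dots,\sigma_q$ of $\Delta$ with $f(\sigma_1)\cap\cdots\cap f(\sigma_q)\neq\emptyset$, matching the conclusion of the corollary verbatim.

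Since the argument is a pure substitution of admissible parameter values into a theorem whose proof is already complete, there is no genuine obstacle to overcome; the only step that deserves explicit mention is the verification that the corner case $m=0$ (together with $k=d$) is actually permitted by the statement of Theorem \ref{teo:first}, which it plainly is.
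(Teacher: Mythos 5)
Your proposal is correct and coincides with the paper's own proof, which also derives the corollary by specializing Theorem \ref{teo:first} to $k=d$ and $m=0$. The only extra content you add is the explicit check that these parameter values are admissible and that the hypotheses collapse as claimed, which is a harmless elaboration of the same argument.
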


\begin{proof}
Apply Theorem $4.5$ to the case $k=d$ and $m=0$.
\end{proof}

\begin{obs} Note that if $m > (d-k)(r-1)$ then  \begin{eqnarray} i((R_{(C_1, \ldots, C_{k+1})})_{\Delta(2)}^{\ast r}) \ge (d+1)(r-1)+1, \nonumber \end{eqnarray} and there exist $r$ pairwise disjoint rainbow faces $\sigma_1, \cdots , \sigma_r$ such that $f(\sigma_1) \cap \cdots \cap f(\sigma_r) \neq \emptyset$.
This means that the interesting case of the Theorem \ref{teo:first} is when $m = (d-k)(r-1)$.
\end{obs}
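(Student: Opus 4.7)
\medskip
\noindent\textbf{Proof plan.} The strategy is to strengthen each of the index estimates from the proof of Lemma~\ref{lema:index-3-3} by one unit under the stricter hypothesis $m>(d-k)(r-1)$, and then invoke Volovikov's Theorem~\ref{teo:Vol-4} with $y=r=p^n$ in place of $y=r-1$ (as in the proof of Theorem~\ref{teo:first}), thereby forcing the intersection of $r$ --- rather than only $r-1$ --- pairwise disjoint rainbow faces under $f$.

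For the first assertion of the observation I would revisit the connectivity computation inside the proof of Lemma~\ref{lema:index-3-3}. The isomorphism
\[ (R_{(C_1,\dots,C_{k+1})})_{\Delta(2)}^{\ast r} \cong \Delta_{|C_1|,r}\ast\dots\ast\Delta_{|C_{k+1}|,r}, \]
combined with Theorems~\ref{teo:3.9}, \ref{teo:3.11} and \ref{teo:3.12}, already delivers the estimate
\[ i\bigl((R_{(C_1,\dots,C_{k+1})})_{\Delta(2)}^{\ast r}\bigr) \;\ge\; (k+1)(r-1)+m. \]
Substituting $m \ge (d-k)(r-1)+1$ upgrades this bound to $(d+1)(r-1)+1$, which is precisely the first half of the observation.

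For the second assertion, I would recycle the decomposition $(R)_{\Delta(2)}^{\ast r}=A\cup B$ used in the proof of Lemma~\ref{lema:index-3-3}, where $B$ is $(\mathbb{Z}_p)^n$-equivariantly homeomorphic to $(R)_{\Delta(2)}^{\times r}$ and the diagonal-complement retraction produces an equivariant map $A\to S^{r-2}$, giving $i'(A)\le r-1$. Theorem~\ref{teo:3.7}(4) then propagates the improved join estimate to
\[ i\bigl((R_{(C_1,\dots,C_{k+1})})_{\Delta(2)}^{\times r}\bigr) \;\ge\; (d+1)(r-1)+1-(r-1)\;=\;d(r-1)+1. \]
Since $d(r-1)+1=(d-1)(p^n-1)+r$, this is exactly the hypothesis of Volovikov's Theorem~\ref{teo:Vol-4} applied, with $y=r$, to the equivariant map $h(x_1,\dots,x_r)=f(x_1)$ from the first lemma of Section~\ref{sec:main}. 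The resulting conclusion $A(h,r)\ne\emptyset$ feeds back into that same lemma, now taken with $q=r$, yielding the desired $r$ pairwise disjoint rainbow faces whose $f$-images share a common point.

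The main subtlety is the exclusion $y\ne 3$ in the statement of Theorem~\ref{teo:Vol-4}. Because $y=r$ in our application, the one case to inspect is $r=3$, which lies in the range $r\in\{3,4,5\}$ covered by the remark immediately following that theorem; hence no real obstacle arises. Everything else is bookkeeping that parallels the proof of Theorem~\ref{teo:first}.
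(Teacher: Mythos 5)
Your proposal is correct and follows exactly the route the paper intends: the Observation is stated without proof precisely because it is the one-unit upgrade of the index bound $(k+1)(r-1)+m$ from the proof of Lemma~\ref{lema:index-3-3}, propagated through the $A\cup B$ decomposition to give $i\bigl((R)_{\Delta(2)}^{\times r}\bigr)\ge d(r-1)+1=(d-1)(p^n-1)+r$, which is the hypothesis of Theorem~\ref{teo:Vol-4} with $y=r$. Your handling of the $y\ne 3$ exclusion via the remark covering $r=3$ is also the right check, so nothing is missing.
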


\begin{example}\label{ex:first} Let $r=7$, $d=8$, $k=7$ and $m=6$. Then we have $k+1=8$ colors $C_1, \ldots C_8$ where $|C_i| \ge 2r-1 =13$, for $i=1, \ldots,6$ and $|C_7|, |C_8| \ge 2r-4 =10$. Note that the condition $m \ge (d-k)(r-1)$ follows (more especifically we have an equality). Then, there exist $q=r-1=6$ pairwise disjoint rainbow faces $\sigma_1, \sigma_2, \sigma_3, \sigma_4, \sigma_5$ and $\sigma_6$ such that $f(\sigma_1) \cap f(\sigma_2) \cap f(\sigma_3) \cap f(\sigma_4) \cap f(\sigma_5) \cap f(\sigma_6)  \neq \emptyset$
\end{example}
The following example illustrates the  Corollary $4.6$.
\begin{example}\label{ex:second}
Let $d=2$, $r=7$ and let $\mathcal{C} = (C_1, C_2, C_3)$ be a coloring of the vertex set $V(\Delta)$, where $|C_1|=|C_2|=|C_3| =2r-4=10$. Let $f: \Delta \rightarrow \mathbb{R}^{2}$ be a continuous map. Then by Corollary \ref{cor:first}, there exist $6$ pairwise disjoint rainbow faces $\sigma_i (i=1,\dots, 6)$ such that \[f(\sigma_1) \cap f(\sigma_2) \cap f(\sigma_3) \cap f(\sigma_4) \cap f(\sigma_5) \cap f(\sigma_6)  \neq \emptyset \, . \]

\vspace{0.2cm}

\vspace{0.2cm}

\end{example}


\end{document}